\numberwithin{equation}{section}
\theoremstyle{plain}
\newtheorem{theorem}{Theorem}[section]
\newtheorem{lemma}[theorem]{Lemma}
\newtheorem{proposition}[theorem]{Proposition}
\newtheorem{conjecture}[theorem]{Conjecture}
\theoremstyle{definition}
\newtheorem{definition}[theorem]{Definition}
\newtheorem{example}[theorem]{Example}
\theoremstyle{remark}
\newtheorem{fact}[theorem]{Fact}
\newtheorem{remark}[theorem]{Remark}
\newtheorem{case[theorem]}{Case}
\newcommand*{\ind}{\mathbbm{1}}
\definecolor{blue}{rgb}{0,0,1}
\definecolor{red}{rgb}{1,0,.2}
\date{{\today}}
\author{K\'aroly Simon}
\address[K\'aroly Simon]{Budapest University of Technology and Economics, Department of Stochastics, Institute of Mathematics and
MTA-BME Stochastics Research Group,
 1521 Budapest, P.O.Box 91, Hungary} \email{simonk@math.bme.hu}
\author{Lajos V\'ag\'o}
\address[Lajos V\'ag\'o]{Budapest University of Technology and Economics, Department of Stochastics,
 1521 Budapest, P.O.Box 91, Hungary} \email{vagolala@gmail.com}
 \thanks{2000 {\em Mathematics Subject Classification.} Primary
28A80 Secondary 60J80, 60J85
\\ \indent
{\em Key words and phrases.} Random fractals, Hausdorff dimension,
 processes in random environment.\\
\indent
 The research of Simon was
 partially supported by the  grant 346300 for IM PAN from the Simons Foundation and the matching 2015-2019 Polish MNiSW fund
 and also partially
  supported by OTKA Foundation
\# K 123782,
 and  MTA TKI Stochastics Reserch Group.
}
\title{\parbox{14cm}{\centering{ Fractal Percolations }}}
\begin{document}


\begin{abstract}
One of the most well known random fractals is the so-called Fractal percolation set. This is defined as follows: we divide the unique cube in $\mathbb{R}^d$ into $M^d$ congruent sub-cubes. For each of these cubes a certain retention  probability is assigned with which we retain the cube. The interior of the discarded cubes contain no points from the Fractal percolation set.  In the retained ones we repeat the process ad infinitum. The set that remains after infinitely many steps is the Fractal percolation set.
The homogeneous case is when all of these probabilities are the  same. Recently, there have been considerable developments in regards with the projection and slicing properties in the homogeneous case. In the first part of this note we give an account of some of these recent results and then we discuss the difficulties and provide some new partial results in the non-homogeneous case.
\end{abstract}

\maketitle

\section{Introduction}

This is a survey of some classical, recent and even new  results of a family of random fractal sets introduced by
B. Mandelbrot \cite{Mandelbrot1974}, \cite{Mandelbrot1983}.
In the literature this family  of random Cantor sets  is called Fractal percolation, Mandelbrot percolation and sometimes canonical curdling. This survey is based on the lectures the first author gave on the Dynamical Systems, Simons Semester in the Autumn of 2015 at IMPAN.
This survey focuses on the most recent developments and complements two recent, more general surveys \cite{chayes1995aspects}, \cite{Dekking2009} and \cite{rams2014geometry}, and the corresponding chapters in the book \cite{bishop2016fractals}.

\section{The definition of the Fractal percolation Sets}\label{a74}
 The parameters of the construction are
 \begin{itemize}
   \item $d$ which gives  the dimension where we work,
   \item $M$: we partition the unit cube $Q:=[0,1]^d$ into $M^d$ congruent $1/M$ mesh-cubes
       $$
       K_{\mathbf{i}}:=\left[\frac{i_1}{M},\frac{i_1+1}{M}\right]
       \times \cdots \times
       \left[\frac{i_d}{M},\frac{i_d+1}{M}\right],
   $$
   where $\mathbf{i}\in \mathcal{I}:=\left\{0, \dots ,M-1\right\}^d$
   \item $\mathbf{p}:=\left\{p_{\mathbf{i}}\right\}_{\mathbf{i}
       \in\mathcal{I}}$, the vector of probabilities  $p_{\mathbf{i}}\in [0,1]$. ($\mathbf{p}$ is not a probability vector in general).
 \end{itemize}
 For every $\mathbf{i}\in \mathcal{I}$ let
 $S_{\mathbf{i}}(x)$ be the similarity transformation which sends
 $Q$ onto $K_\mathbf{i}$ in the most natural way.

 The idea of the construction is that in the first step of the construction, for every $\mathbf{i}\in\mathcal{I}$, we retain every $K_\mathbf{i}$ with probability $p_\mathbf{i}$ independently and in every retained cube $K_{\mathbf{i}}$ we repeat the same process independently of everything. What  remains after infinitely many steps it is the Fractal percolation random set, which may be the emptyset. Now we write down the construction in a formal way.
 \subsection{The construction of the fractal percolation
  Cantor set} \label{p99}
Consider the $M^d$-array tree $\mathcal{T}$. The root is $ \emptyset $ and the nodes of $\mathcal{T}$ are the  finite words above the alphabet $\mathcal{I}$. For a level $n$ node
$\mathfrak{i}=(\mathbf{i}_1, \dots\mathbf{i}_n)$, where $\mathbf{i}_k\in\mathcal{I}$, $k=1, \dots ,n$ we define the corresponding:
\begin{itemize}
  \item map $S_{\mathfrak{i}}:=S_{\mathbf{i}_1}\circ\cdots \circ S_{\mathbf{i}_1}$
  \item level $n$-cube $K_{\mathfrak{i}}:=S_{\mathfrak{i}}(Q)$
  \item random variable
$X_{\mathfrak{i}}$ such that
$$
\mathbb{P}\left(X_{\mathfrak{i}}=1\right):=
p_{\mathbf{i}_n},\mbox{ and }
\mathbb{P}\left(X_{\mathfrak{i}}=0\right):=
1-p_{\mathbf{i}_n},
\quad
\mathfrak{i}=(\mathbf{i}_1, \dots\mathbf{i}_n)
$$
and
$$
\left\{X_\mathfrak{i}\right\}_{i\in\mathcal{T}} \mbox{ are independent.}
$$
We define $X_{ \emptyset }:=1$.
\end{itemize}
The set of the  labels of retained level $n$ cubes is
$$
\mathcal{E}_n:=
\mathcal{E}_n(d,M,\mathbf{p}):=\left\{
\mathfrak{i}=(\mathbf{i}_1, \dots\mathbf{i}_n)\in \mathcal{I}^n:
X_{\mathbf{i}_1}\cdot X_{\mathbf{i}_1\mathbf{i_2}}
\cdots X_{\mathbf{i}_1, \dots\mathbf{i}_n}=1
\right\}.
$$
The $n$-th approximation $E_n(d,M,\mathbf{p})$ of the Fractal percolation set $E(d,M,\mathbf{p})$ is
$$
E_n(d,M,\mathbf{p}):=\bigcup\limits_{\mathfrak{i}
\in\mathcal{E}_n}K_{\mathfrak{i}}.
$$
 Finally the Fractal percolation set corresponding to parameters
 $(d,M,\mathbf{p})$ is defined by
 \begin{equation*}
  E:= E(d,M,\mathbf{p}):=\bigcap\limits_{n=1}^{\infty }E_n(d,M,\mathbf{p}).
 \end{equation*}

 \begin{definition}
If all the probabilities $p_\mathbf{i}\equiv p$, then we say that $E$ is homogeneous in probabilities, otherwise $E$ is inhomogeneous in probabilities. In the literature the term Fractal percolation is used mostly for the homogeneous in probability case.
 \end{definition}

 \begin{remark}
   Clearly, $(\#\mathcal{E}_n)$ is a branching process with mean $\mathbb{E}\left[\#\mathcal{E}_1\right]=
   \sum\limits_{\mathbf{i}\in\mathcal{I}}p_\mathbf{i}$. In the homogeneous case when $p_\mathbf{i}\equiv p$, the offspring distribution is $\mathrm{Binomial}(M^d, p)$.
   \end{remark}
Therefore in what follows we always assume that
\begin{equation}\label{a94}
 \sum\limits_{\mathbf{i}\in\mathcal{I}}p_{\mathbf{i}}>1,
\end{equation}
because otherwise the process dies out in finite steps almost surely.
The following Theorem was proved by Falconer, Mauldin and Williams:
\begin{theorem}\label{a96} Conditioned on non-extinction (that is $E\ne  \emptyset $)
\begin{equation}\label{a95}
   \dim_{\rm H} E=\dim_{\rm B}=\frac{\log \sum\limits_{\mathbf{i}\in\mathcal{I}}p_{\mathbf{i}}}{\log M} \text{holds almost surely.}
\end{equation}
\end{theorem}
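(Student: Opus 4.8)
Write $m := \sum_{\mathbf{i}\in\mathcal{I}} p_{\mathbf{i}}$ and $s := \dfrac{\log m}{\log M}$, so that $M^{s}=m$ and $m>1$ by~\eqref{a94}. The plan is to establish the two one-sided bounds $\dim_{\rm H} E \ge s$ and $\overline{\dim}_{\rm B} E \le s$; since $\dim_{\rm H} E \le \underline{\dim}_{\rm B} E \le \overline{\dim}_{\rm B} E$ always holds, these two squeeze all three quantities to $s$. Throughout I use that $Z_n := \#\mathcal{E}_n$ is a Galton--Watson process with mean $m$, so $W_n := Z_n/m^{n}$ is a non-negative martingale with an almost sure limit $W\ge 0$; as the offspring law is bounded by $M^{d}$, the Kesten--Stigum $L\log L$ condition is trivially met, whence $\mathbb{E}[W]=1$ and, up to a null set, $\{W>0\}=\{E\neq\emptyset\}$.

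\textbf{Upper bound.} First I would cover $E$ by the retained level-$n$ cubes $\{K_{\mathfrak{i}}\}_{\mathfrak{i}\in\mathcal{E}_n}$, each of diameter $\sqrt{d}\,M^{-n}$. For $t>s$,
\[
\mathcal{H}^{t}_{\sqrt{d}\,M^{-n}}(E)\ \le\ \sum_{\mathfrak{i}\in\mathcal{E}_n}\bigl(\sqrt{d}\,M^{-n}\bigr)^{t}\ =\ d^{t/2}\,Z_n\,M^{-nt}\ =\ d^{t/2}\,W_n\,M^{n(s-t)} .
\]
As $W_n\to W<\infty$ almost surely and $s-t<0$, the right-hand side tends to $0$, so $\mathcal{H}^{t}(E)=0$ and $\dim_{\rm H}E\le t$ for every $t>s$, giving $\dim_{\rm H}E\le s$. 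The same cover yields $N(M^{-n})\le Z_n$ for the box-counting function, and $\frac{\log Z_n}{n\log M}=s+\frac{\log W_n}{n\log M}\to s$, so $\overline{\dim}_{\rm B}E\le s$. This part requires no conditioning and holds on the whole probability space.

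\textbf{Lower bound.} This is the substantive half. I would build the natural branching measure $\nu$ on $E$: for $\mathfrak{i}\in\mathcal{E}_n$ let $W_{\mathfrak{i}}$ be the martingale limit of the subtree descending from $\mathfrak{i}$ and put $\nu(K_{\mathfrak{i}}):=W_{\mathfrak{i}}/m^{n}$. The additive relation $W=m^{-n}\sum_{\mathfrak{i}\in\mathcal{E}_n}W_{\mathfrak{i}}$ makes these weights consistent, so $\nu$ is a well-defined random Borel measure carried by $E$ with $\nu(Q)=W$ and $\mathbb{E}[\nu(Q)]=1$. The goal is to show that the expected $t$-energy $\mathbb{E}[I_t(\nu)]$, where $I_t(\nu):=\iint|x-y|^{-t}\,d\nu(x)\,d\nu(y)$, is finite for every $t<s$. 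Decomposing by the level $n$ at which two codings first differ,
\[
\mathbb{E}[I_t(\nu)]=\sum_{n\ge 0}T_n,\qquad T_n:=\mathbb{E}\!\left[\iint_{D_n}\frac{d\nu(x)\,d\nu(y)}{|x-y|^{t}}\right],
\]
where on $D_n$ the points share a level-$n$ cube but lie in distinct level-$(n+1)$ children, so $|x-y|=M^{-(n+1)}\,|(\mathbf{a}-\mathbf{b})+(u-v)|$ after renormalising the coordinates to $u,v\in Q$. Using the statistical self-similarity of the cascade — conditioned on survival, the restriction of $\nu$ to a retained cube is an independent rescaled copy of $\nu$ — each $T_n$ factorises into retention probabilities, the scaling factor $M^{(n+1)t}$, and an integral of two independent copies against the expected measure $\bar\nu:=\mathbb{E}[\nu]$. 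Summing the shared-path weights via $\sum_{\mathfrak{h}\in\mathcal{I}^{n}}\prod p = m^{n}$ collapses this to
\[
T_n=C\,m^{-2}M^{t}\,\bigl(M^{t}/m\bigr)^{n},\qquad C:=\sum_{\mathbf{a}\ne\mathbf{b}}p_{\mathbf{a}}p_{\mathbf{b}}\iint\frac{d\bar\nu(u)\,d\bar\nu(v)}{|(\mathbf{a}-\mathbf{b})+(u-v)|^{t}} .
\]
Because $M^{t}/m=M^{t-s}<1$ exactly when $t<s$, the series converges, hence $I_t(\nu)<\infty$ almost surely; on $\{W>0\}$ the normalised measure $\mu:=\nu/W$ then has finite $t$-energy, and the energy form of Frostman's lemma gives $\dim_{\rm H}E\ge t$. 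Letting $t\uparrow s$ completes the lower bound.

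\textbf{The main obstacle.} The delicate point is the finiteness of the constant $C$, equivalently the convergence of the cross-integral $\iint|(\mathbf{a}-\mathbf{b})+(u-v)|^{-t}\,d\bar\nu(u)\,d\bar\nu(v)$ for \emph{adjacent} children $\mathbf{a},\mathbf{b}$: there the two points may lie arbitrarily close along the shared face, so the singularity of $|x-y|^{-t}$ must be integrated rather than bounded below. In the homogeneous case $\bar\nu$ is Lebesgue measure and this integral is finite for all $t<d$ (hence for all $t<s\le d$), but in the inhomogeneous case $\bar\nu$ is instead the self-similar measure with weights $p_{\mathbf{i}}/m$, which need not have bounded density; one must then verify that $\bar\nu$ does not over-concentrate on cube interfaces, keeping $C<\infty$ for every $t<s$. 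Tracking these per-level constants against the geometric factor $M^{n(t-s)}$ is where the real work lies; once it is done, the Kesten--Stigum positivity of $W$ upgrades $\dim_{\rm H}E\ge s$ to an almost sure statement on the event of non-extinction, matching the upper bound.
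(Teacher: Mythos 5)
The paper itself offers no proof of Theorem \ref{a96}: it is quoted as a known result of Falconer and of Mauldin--Williams. So your attempt can only be measured against the standard argument in those sources, and that is in fact exactly what you have reconstructed: a first-moment covering bound for $\overline{\dim}_{\rm B}E\le s$, the Kesten--Stigum identification of $\{W>0\}$ with non-extinction, the branching (natural) measure $\nu$ with $\nu(K_{\mathfrak{i}})=W_{\mathfrak{i}}/m^{n}$, and a second-moment/energy estimate with the potential-theoretic lower bound. Your bookkeeping is right: the decomposition by splitting level, the factorization $\mathbb{E}[\nu(K_{\mathfrak{i}})\nu(K_{\mathfrak{j}})]$ into path weights, the identity $\sum_{\mathfrak{h}\in\mathcal{I}^n}\prod p=m^n$, and the geometric ratio $M^{t}/m=M^{t-s}<1$ for $t<s$ are all correct, as is the reduction of each $T_n$ to the single constant $C$ involving $\bar\nu\times\bar\nu$.

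The one genuine hole is the step you yourself flag: finiteness of $C$, i.e.\ of $\iint|(\mathbf{a}-\mathbf{b})+(u-v)|^{-t}\,d\bar\nu(u)\,d\bar\nu(v)$ for adjacent children. But you overestimate the difficulty: no analysis of ``over-concentration on interfaces'' is needed, because $\bar\nu$ automatically satisfies a uniform $s$-Frostman bound. Indeed, every weight obeys $q_{\mathbf{i}}=p_{\mathbf{i}}/m\le 1/m=M^{-s}$ (using $p_{\mathbf{i}}\le 1$ and $m>1$ from \eqref{a94}), so every level-$k$ mesh cube has $\bar\nu$-mass at most $M^{-ks}$, and since a ball of radius $M^{-k}$ meets at most $3^d$ such cubes,
\begin{equation*}
\sup_{x}\ \bar\nu\bigl(B(x,M^{-k})\bigr)\ \le\ 3^{d}\,M^{-ks}.
\end{equation*}
Writing $\delta(u,v):=|(\mathbf{a}-\mathbf{b})+(u-v)|$ and decomposing into the annuli $\{M^{-(k+1)}\le\delta<M^{-k}\}$, Fubini gives $(\bar\nu\times\bar\nu)\{\delta<M^{-k}\}=\int\bar\nu\bigl(B(v+\mathbf{b}-\mathbf{a},M^{-k})\bigr)d\bar\nu(v)\le 3^{d}M^{-ks}$ (in particular $\{\delta=0\}$ is null, so no atom issue on shared faces), whence
\begin{equation*}
\iint \delta(u,v)^{-t}\,d\bar\nu(u)\,d\bar\nu(v)\ \le\ 1+\sum_{k\ge 0}M^{(k+1)t}\,3^{d}M^{-ks}\ =\ 1+\frac{3^{d}M^{t}}{1-M^{t-s}}\ <\ \infty
\end{equation*}
for every $t<s$, uniformly in $\mathbf{a},\mathbf{b}$, so $C\le m^{2}\bigl(1+3^{d}M^{t}/(1-M^{t-s})\bigr)<\infty$. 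With this two-line patch your proof is complete and fully covers the inhomogeneous case; your remark that the homogeneous case is easy because $\bar\nu$ is Lebesgue is subsumed by the same bound.
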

This is why we say that the expected dimension of $E$ is $\frac{\log \sum\limits_{\mathbf{i}\in\mathcal{I}}p_{\mathbf{i}}}{\log M}$. We will frequently use the expression that the expected dimension of $E$ is greater than $1$. Clearly, this is equivalent to

\begin{equation}\label{a93}
     \sum\limits_{\mathbf{i}\in\mathcal{I}}p_{\mathbf{i}}>M.
\end{equation}
\begin{definition}[Natural measure in the homogeneous case]\label{a77}
Here we assume that all $p_{\mathbf{i}}\equiv p$. Then $\mathbb{E}\left[\# \mathcal{E}_n\right]=\left(M^d \cdot p\right)^n$. So we consider
$$
Z:=\lim\limits_{n\to\infty} \frac{\#\mathcal{E}_n}{\left(M^d \cdot p\right)^n}.
$$
As always we assume that $p>1/M^d$. It is well known from the theory of branching processes that
\begin{equation}\label{a76}
  \mathbb{P}\left(Z=0|\mathrm{non-extinction}\right)=0
  \text{ and }
  \mathbb{E}\left[Z\right]=1.
\end{equation}
It was proved by Mauldin and Williams in \cite{Mauldin1986} that 
so-called  natural measure $\mu$ defined below exists:
\begin{equation}\label{a75}
\mu:=  \lim\limits_{n\to\infty}
  \frac{\mathcal{L}_d|_{E_n}}{p^n \cdot Z}.
\end{equation}
\end{definition}

First we collect some of the interesting results related to the homogeneous case then we focus on the inhomogeneous one.

\section{Homogeneous probabilities}

Throughout this section we collect recent results on fractal percolation constructed with homogeneous probabilities, i.e., $p_\mathbf{i} = p$ for all $\mathbf{i}$. If the ambient dimension $d$ is considered fixed and $M$ is also fixed then we write $E(p)$ for the fractal percolation set $E(d,M,\mathbf{p})$ in this case.

\subsection{Projections of the fractal percolation set
and its natural measure}

Orthogonal projections of fractal percolation sets parallel to coordinate axes were investigated by Falconer and Grimmett \cite{Falconer1992, Falconer1994}. They obtained sufficient and necessary conditions for the existence of interior points of the projected set. It took roughly two decades to extend these results to other directions as well. Fist we need some notation:

\begin{definition}\label{u98}
 Let $\alpha:=\left\{\mathbf{a}^1, \dots ,\mathbf{a}^k\right\}$ be a k-tuple of orthonormal vectors in $\mathbb{R}^d$
for some $k\in\left\{1, \dots ,d-1\right\}$.
The collection of all such  $\alpha$ is denoted by $\mathcal{A}_{d,k}$.
We write $S_\alpha$ for the subspace of $\mathbb{R}^d$ spanned by
$\alpha$ and  $ \Pi_\alpha$ is the orthogonal projection from $\mathbb{R}^d$ to
$S_\alpha$. The assumption of the following theorem is  a condition which we call condition  $A(\alpha)$ (see Definition \ref{Balpha}). It was defined in \cite{Rams} in $\mathbb{R}^2$. The authors of this note
 extended the definition of condition $A(\alpha)$
  to higher dimension in the natural way (see
\cite{simon2014projections}).
\end{definition}

\begin{theorem}\cite{Rams, simon2014projections}\label{thm:interval}
Fix $d\geq 2$, $1\leq k<d$. If\  Condition $A(\alpha)$ holds for an $\alpha\in\mathcal{A}_{d,k}$ then $\Pi_\alpha(E)$ has nonempty interior, conditioned on $E\neq \emptyset$.
Hence,
if  Condition $A(\alpha)$ holds for all $\alpha\in \mathcal{A}_{d,k}$ then almost every realization of the fractal percolation set,
for \emph{all} $\alpha$ orthogonal projections $\Pi_\alpha(E)$ has nonempty interior, conditioned on $E\neq \emptyset$.
\end{theorem}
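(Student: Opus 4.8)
The plan is to prove the single-direction statement first — that for a fixed $\alpha$ with Condition $A(\alpha)$ the projection $\Pi_\alpha(E)$ has nonempty interior, conditioned on $E\neq\emptyset$ — and then to bootstrap this to an almost sure statement and finally to a statement holding simultaneously for all $\alpha$. First I would exploit the statistical self-similarity of the projection. Writing $\mathcal{E}_1$ for the set of retained level-$1$ cubes, one has the recursive identity
\[
\Pi_\alpha(E) \;=\; \bigcup_{\mathbf{i}\in\mathcal{E}_1} \Pi_\alpha\bigl(S_{\mathbf{i}}(E^{(\mathbf{i})})\bigr),
\]
where the $E^{(\mathbf{i})}$ are independent copies of $E$ and $\Pi_\alpha\circ S_{\mathbf{i}}$ acts on $S_\alpha$ as a contraction by $1/M$ followed by a translation. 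Thus $\Pi_\alpha(E)$ is itself a random self-similar covering of $\Pi_\alpha(Q)$ by $1/M$-scaled copies of independent realizations, and Condition $A(\alpha)$ (Definition \ref{Balpha}) is precisely the geometric/probabilistic hypothesis guaranteeing that the projected level-$1$ cubes cover $\Pi_\alpha(Q)$ with enough redundant overlap to make this covering scheme supercritical.

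Next comes the core step, which I would carry out as a percolation argument on the tree $\mathcal{T}$. Fix a small target box $J\subset S_\alpha$ and declare a node of $\mathcal{T}$ to be \emph{$J$-good} if the rescaled projection of its cube still covers $J$ through a surviving descendant structure. Condition $A(\alpha)$ ensures that the expected number of retained level-$1$ children whose projections cover a prescribed sub-box of $J$ strictly exceeds $1$, so the family of $J$-good nodes stochastically dominates a supercritical Galton--Watson process; a martingale and second-moment estimate in the spirit of \eqref{a76} then shows that with positive probability the nested covering of $J$ never develops a gap, whence $J\subset\Pi_\alpha(E)$. I expect the delicate point here to be the passage from ``$J$ is covered in positive Lebesgue measure'' to ``$J$ is \emph{entirely} covered'': this is exactly where the overlap built into Condition $A(\alpha)$ must be used, to rule out the formation of uncovered cracks at every scale, and it is the real heart of the proof.

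To remove the conditioning I would again use the tree structure. Conditioned on $E\neq\emptyset$ infinitely many disjoint subtrees survive, and each of them independently attempts to cover some box; by the independence of distinct subtrees and a zero-one argument resting on \eqref{a76}, the event that $\Pi_\alpha(E)$ has nonempty interior has probability $0$ or $1$ on non-extinction, and since it has positive probability it must equal $1$. Finally, for the simultaneous statement over all $\alpha\in\mathcal{A}_{d,k}$ the main obstacle is interchanging ``almost surely'' with ``for all $\alpha$'', since $\mathcal{A}_{d,k}$ is uncountable. I would resolve this by observing that Condition $A(\alpha)$ is an open condition and that containing a fixed box is a robust property: continuity of $\alpha\mapsto\Pi_\alpha$ lets a covering valid for one direction persist under small perturbations. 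Handling a countable dense set of directions almost surely, together with a compactness and uniformity argument on the Stiefel manifold $\mathcal{A}_{d,k}$, then upgrades the conclusion to hold for every $\alpha$ on a single full-measure event.
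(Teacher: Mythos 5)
Your single--direction core contains a genuine gap, and it sits exactly where you yourself flag ``the real heart'': the passage from supercriticality to the absence of uncovered points of $J$ at every scale. First, the right object is not whether projected cubes ``cover sub-boxes'': $x\in\Pi_\alpha(E)$ if and only if the fiber $\ell_\alpha(x)=\Pi_\alpha^{-1}(x)$ meets $E$, which (by compactness) happens iff the count $N_n(x)$ of retained level-$n$ cubes crossing $\ell_\alpha(x)$ stays positive for all $n$. Condition $A(\alpha)$ (Definition \ref{u99}, incidentally --- not Definition \ref{Balpha}, which is Condition $B(\alpha)$) is precisely a pointwise fiber statement: $F_\alpha^{r_\alpha}\ind_{I_1^\alpha}\geq\ind_{I_2^\alpha}$ says that for \emph{each} $x\in I_2^\alpha$ the expected number of surviving level-$r_\alpha$ cubes mapping $x$ back into $I_1^\alpha$ is at least one, and iterating yields exponential growth of $\mathbb{E}\,N_n(x)$. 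But per-$x$ survival has probability strictly less than $1$, and $J$ is uncountable: domination by a supercritical Galton--Watson process plus a second-moment bound gives positive survival probability for each \emph{fixed} $x$, and no amount of ``redundant overlap'' phrased at the level of a single sub-box converts this into simultaneous survival of all fibers over $x\in J$. The actual argument of Rams and Simon, as sketched in the survey right after Theorem \ref{thm:interval}, closes this with a quantitative large deviation estimate: once $N_n(x)$ is large, the conditional probability that the count fails to keep growing is superexponentially small in $N_n(x)$, while at each level $n$ the lines fall into only exponentially many equivalence classes (two lines crossing the same set of level-$n$ cubes are indistinguishable for this purpose), so a union bound over this scale-$n$ discretization, summed over $n$, rules out gaps at all scales simultaneously. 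Your proposal names this difficulty but supplies no mechanism for it.

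The all-$\alpha$ upgrade as you propose it would fail outright. ``$\Pi_\alpha(E)$ contains a fixed interval'' is not an open property of $\alpha$ for the limit set: a configuration of level-$n$ cubes whose projections control $J$ is stable only under perturbations of $\alpha$ of order $M^{-n}$, and this tolerance shrinks to zero as $n\to\infty$, so handling a countable dense set of directions and invoking continuity of $\alpha\mapsto\Pi_\alpha$ plus compactness of $\mathcal{A}_{d,k}$ does not transfer the conclusion to all $\alpha$. (Openness of Condition $A(\alpha)$ ensures the \emph{hypothesis} holds on open sets of directions; it says nothing about persistence of the random \emph{conclusion}.) In the genuine proof, simultaneity in $\alpha$ is obtained by the same device as simultaneity in $x$: the large deviation bound is applied to a level-$n$ discretization of all pairs $(\alpha,x)$ --- that is, of all lines --- whose cardinality grows only exponentially in $n$, yielding a single summable union bound and hence one full-measure event on which every fiber in every direction survives. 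Your zero--one reduction on non-extinction (infinitely many independent surviving subtrees, each making an independent attempt) is sound for a fixed $\alpha$; the missing idea is the uniform, all-scales control over lines, not the removal of the conditioning.
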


For example, if we are on $\mathbb{R}^2$, in the homogeneous case when all probabilities are equal to a $p>\frac{1}{M}$  a typical non-empty realisation of the fractal percolation contains an interval in  its angle $\alpha$ projection for all $\alpha$. Although the proof is quite technical, it basically consists of two main parts. The authors of \cite{Rams} consider the intersections of lines $\ell_\alpha(x)$ with level-$n$ approximations $E_n$. On the one hand, Condition $A(\alpha)$ guarantees that the expected number of level-$n$ small squares in $E_n$ intersecting $\ell_\alpha(x)$ increase exponentially with $n$. On the other hand, a large deviation argument shows that this expected behavior really realises almost surely, simultaneously for all directions.

The paper of Peres and Rams \cite{Peres2016} evolves the above argument to obtain information about the projection of the natural self-similar measure as well.

\begin{theorem}[Peres, Rams]\label{thm:Peres}\label{0334}
For planar fractal percolation (i.e., $d=2$) constructed with homogeneous probabilities, almost surely, conditioned on $E \neq \emptyset$, all orthogonal projections of the natural measure are absolutely continuous and except for the horizontal and vertical projection have H{\"o}lder continuous density.
\end{theorem}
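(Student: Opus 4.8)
The plan is to exploit the statistical self-similarity of the natural measure $\mu$ from Definition~\ref{a77}. Writing $\nu_\alpha:=\Pi_\alpha\mu$ for the projected measure, I would condition on the first generation $\mathcal{E}_1$ of retained squares and use that each $\Pi_\alpha\circ S_{\mathbf{i}}$ is an orientation-preserving contraction of the line $S_\alpha$ by the factor $1/M$. This gives the random self-similar identity
\[
\nu_\alpha=\sum_{\mathbf{i}\in\mathcal{E}_1} w_{\mathbf{i}}\,(T_{\mathbf{i}})_*\nu_\alpha^{(\mathbf{i})},
\]
where the $T_{\mathbf{i}}$ are similarities of $S_\alpha$ with ratio $1/M$ whose translation parts depend only on $\mathbf{i}$ and on the angle $\theta$ of $\alpha$, the $\nu_\alpha^{(\mathbf{i})}$ are independent copies of $\nu_\alpha$, and the $w_{\mathbf{i}}$ are the renormalised random masses. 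The first thing I would record is a scaling balance: a line perpendicular to $\alpha$ meets $\asymp (Mp)^n$ of the retained level-$n$ squares while each such square carries $\mu$-mass $\asymp (M^2p)^{-n}$, so the $\nu_\alpha$-mass of a tube of width $M^{-n}$ is of order $(Mp)^n(M^2p)^{-n}=M^{-n}$. Hence $M^{\,n}\nu_\alpha(\text{tube})$ is of order one, which is precisely the regime \eqref{a93} and already suggests a bounded density.

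Next I would prove absolute continuity for \emph{every} direction, including the exceptional ones. Fix $t\in S_\alpha$ and let $D_n(t):=M^{\,n}\,\nu_\alpha\big(B(t,\tfrac12 M^{-n})\big)$ be the density at scale $n$. The self-similar identity expresses $D_{n+1}$ through the independent rescaled copies attached to the $\asymp(Mp)^n$ retained squares crossing the tube, which yields a recursion for $\mathbb{E}[D_n(t)^2]$ in which the cross terms drop out by independence. Under \eqref{a93} this recursion has a finite fixed point, equivalently the relevant second moment (energy) of $\mu$ is finite, so $\sup_n\mathbb{E}[D_n(t)^2]<\infty$. Integrating in $t$ bounds $\mathbb{E}\big[\|D_n\|_{L^2(S_\alpha)}^2\big]$, and $D_n$ converges in $L^2$ to a density $f_\alpha$ with $\mathbb{E}\|f_\alpha\|_{L^2}^2<\infty$. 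Hence $\nu_\alpha\ll\mathcal L$ almost surely, conditioned on $E\neq\emptyset$. This step uses nothing about $\theta$ and so covers the horizontal and vertical projections as well.

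For the Hölder regularity away from the coordinate directions, I would estimate the increments $f_\alpha(t)-f_\alpha(s)$. Two points at distance $|t-s|\asymp M^{-m}$ give tubes that share their ancestry down to generation $m$ and then evolve through independent copies, so their difference is driven by the fluctuations accumulated at levels $\ge m$. The decisive geometric input is how the $M^2$ children spread out under $\Pi_\alpha$: when $\theta\neq 0,\tfrac{\pi}{2}$ the projected intervals of the children overlap with bounded multiplicity, producing a genuine per-level contraction factor strictly below one in the variance of the increments, hence a power bound $\mathbb{E}\,|f_\alpha(t)-f_\alpha(s)|^{2q}\lesssim |t-s|^{\beta q}$ for suitable $q$ and some $\beta=\beta(\theta)>0$. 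Kolmogorov's continuity criterion then yields a version of $f_\alpha$ that is Hölder continuous. At $\theta=0$ or $\theta=\tfrac{\pi}{2}$ the $M^2$ children collapse onto only $M$ distinct projected intervals, the contraction factor degenerates to one, the increment variance no longer decays, and the argument returns only the $L^2$ bound of the previous step; this is exactly why these two directions must be excluded.

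The final, and I expect hardest, step is to pass from ``for each fixed $\alpha$, almost surely'' to ``almost surely, for all $\alpha$ simultaneously,'' in the spirit of Theorem~\ref{thm:interval}. On any compact set of angles bounded away from $0$ and $\tfrac{\pi}{2}$ the crossing combinatorics and the constants in the second-moment and increment estimates are continuous in $\theta$, so one can combine exponential large-deviation tails for $D_n(t)$ with a dyadic net in the pair $(\theta,t)$ and a Borel--Cantelli argument to obtain uniform control. The main obstacle is securing large-deviation bounds with tails strong enough to beat the continuum of directions while remaining uniform in $t$, together with the equicontinuity in $\theta$ needed to interpolate between net points; quantifying how $\beta(\theta)\to 0$ as $\theta\to 0,\tfrac{\pi}{2}$ is the delicate point that simultaneously forces the stated exception and dictates the achievable Hölder exponent.
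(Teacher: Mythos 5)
This theorem is quoted in the survey without proof: it is imported from Peres and Rams \cite{Peres2016}, and the only in-paper guidance on its proof is the remark that \cite{Peres2016} ``evolves'' the Rams--Simon argument of \cite{Rams} --- counting retained level-$n$ squares crossing a line, plus a large-deviation step that upgrades the expected count to an almost-sure statement simultaneously for all directions. Measured against that, your outline has the right skeleton: the tube-mass balance $(Mp)^n(M^2p)^{-n}=M^{-n}$ under \eqref{a93}, second-moment bounds on the scale-$n$ densities $D_n(t)$, increment estimates away from $\theta=0,\pi/2$, and a net-plus-Borel--Cantelli argument in $(\theta,t)$ are exactly the ingredients of the actual proof, and your explanation of why the axis directions must be excluded (the $M^2$ children collapse onto only $M$ projected intervals, so no per-level smoothing occurs, and one is left with a multiplicative-cascade density that is in $L^2$ but not continuous) identifies the correct mechanism.

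Two genuine gaps remain. First, in the $L^2$ step the cross terms do \emph{not} ``drop out by independence'': two distinct retained squares meeting the same tube are correlated through their common ancestry, and controlling precisely these correlations is the substance of the second-moment estimate. What is true is that, conditioning on the first generation, cross terms between \emph{different} first-generation subtrees factorize into products of first moments, yielding a recursion of the form $\mathbb{E}\left[D_{n+1}(t)^2\right]\le a\,\mathbb{E}\left[D_n(t)^2\right]+b$ with $a<1$ under \eqref{a93}; as written your claim is false, though repairable. Second --- and you concede this yourself --- the heart of the theorem is the passage from ``for each fixed $\alpha$, almost surely'' to ``almost surely, for all $\alpha$,'' and your sketch only names the tools (exponential tails beating the continuum of directions, equicontinuity in $\theta$, a quantified contraction rate $\beta(\theta)>0$) without establishing any of them. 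Note also that Kolmogorov's continuity criterion, as you invoke it, produces a continuous \emph{modification} of $f_\alpha$ for each fixed direction and so cannot by itself deliver the simultaneous statement; the published proof instead derives H\"older continuity deterministically from uniform large-deviation bounds on the numbers of retained squares in pairs of nearby tubes (a symmetric-difference count), valid at once for all directions in a compact set away from the axes. Those uniform counting estimates constitute most of \cite{Peres2016}; your proposal correctly locates them as the hard part but does not supply them.
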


\subsection{Dimension conservation}

In the last years several authors received dimension conservation results on projections of fractal percolation sets \cite{falconer2014exact, falconer2015dimcons, shmerkin_si}. First let us introduce three dimension conservation notions relevant for this paper. In all three definitions $A\subset \mathbb{R}^d$ and $\Pi_\alpha$ are orthogonal projections from $\mathbb{R}^d$ to $k$-dimensional planes parametrized by $\alpha\in \mathcal{A}_{d,k}$, $1\leq k < d$ as in Definition \ref{u98}. For simplicity we identify the image space $S_\alpha$ of each $\Pi_\alpha$ with $\mathbb{R}^k$. The following  definition is due to Furstenberg \cite{furstenberg2008}:

\begin{definition}\cite{furstenberg2008}
We say that $\Pi_\alpha$ is \emph{dimension conserving} for $A$ if there is $\varepsilon > 0$ for which
\begin{equation}\label{furst_dim_cons}
  \varepsilon + \dim_{\rm H}  \{ x \in S_\alpha\ :\ \dim_{\rm H} (A \cap \Pi_\alpha^{-1}x) \geq \varepsilon\} \geq \dim_{\rm H} A.
\end{equation}
\end{definition}
The second, slightly different notion is by Falconer and Jin \cite{falconer2015dimcons}.

\begin{definition}\cite{falconer2015dimcons}
We say that $\Pi_\alpha$ is \emph{weakly dimension conserving} for a set $A \subset \mathbb{R}^d$ if $\forall \varepsilon>0$:
\begin{equation}\label{weak_dim_cons}
 \dim_{\rm H}  (A \cap \Pi_\alpha^{-1}x) \geq \dim_{\rm H}  A - k - \varepsilon, \qquad
 \forall x\in W,
\end{equation}
where $W\subset S_\alpha$ has $\mathcal{L}_k(W) > 0$.
\end{definition}

Finally, strong dimension conservation is defined by Shmerkin and Suomala \cite{shmerkin_si}.
\begin{definition}\cite{shmerkin_si}
We say that $\Pi_\alpha$ is \emph{strongly dimension conserving} for a set $A \subset \mathbb{R}^d$ if there is a nonempty open set $U \subset S_\alpha$ such that
\begin{equation*}
  \dim_{\rm H}  (A \cap \Pi_\alpha^{-1}x) = \dim_{\rm H}  A - k
\end{equation*}
for all $x \in U$.
\end{definition}
Clearly, strong dimension conservation implies both \eqref{furst_dim_cons} with $\varepsilon =\dim_{\rm H}  A - k$ and \eqref{weak_dim_cons}.

Falconer and Jin \cite{falconer2014exact} investigated random multiplicative cascade measures defined on self-similar sets, which includes measures supported on the fractal percolation set. Their results imply dimension conservation in the classical Furstenberg sense \eqref{furst_dim_cons} as a corollary of exact dimensionality results \cite[Theorem 3.1]{falconer2014exact}. Importantly, they require no separation condition. In particular, for fractal percolation their results imply the following.
\begin{theorem}\cite[Corollary 1]{falconer2014exact}
For any fixed direction $\alpha \in \mathcal{A}_{d,k}$ a.s.
\[
  \dim_{\rm H}  \mu_\alpha + \dim_{\rm H}  \mu_{x, \alpha} = \dim_{\rm H}  \mu
\]
for $\mu_\alpha$ almost all $x \in \Pi_\alpha K$, where $\mu_\alpha$ is the push forward of $\mu$ by $\Pi_\alpha$ and $\mu_{x,\alpha}$ is the corresponding conditional measure.
\end{theorem}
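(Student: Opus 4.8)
The plan is to derive the additivity formula from the \emph{exact dimensionality} of the three measures $\mu$, $\mu_\alpha$ and $\mu_{x,\alpha}$, which is the real analytic content here (and is the substance of \cite[Theorem 3.1]{falconer2014exact}). Recall that a finite measure $\nu$ on $\mathbb{R}^m$ is exact dimensional with dimension $s$ if the local dimension $\lim_{r\to 0}\log\nu(B(z,r))/\log r$ exists and equals $s$ for $\nu$-almost every $z$, in which case $\dim_{\rm H}\nu=s$. Granting exact dimensionality of all three measures, the theorem reduces to checking that the corresponding local dimensions add along the disintegration $\mu=\int \mu_{x,\alpha}\,d\mu_\alpha(x)$.

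First I would handle $\mu$ itself. By \eqref{a75} the mass that $\mu$ assigns to a retained level-$n$ cube $K_{\mathfrak{i}}$, $\mathfrak{i}\in\mathcal{E}_n$, equals $(M^d p)^{-n}Z_{\mathfrak{i}}/Z$, where the $Z_{\mathfrak{i}}$ are distributed as independent copies of the martingale limit $Z$ of Definition \ref{a77}. Since $\log\mathrm{diam}\,K_{\mathfrak{i}}=-n\log M+O(1)$ and the random factors $Z_{\mathfrak{i}}$ contribute only $o(n)$ to $\log\mu(K_{\mathfrak{i}})$ along a $\mu$-typical point (a standard branching-process estimate), one obtains that $\mu$ is exact dimensional with $\dim_{\rm H}\mu=\log(M^d p)/\log M$, consistent with Theorem \ref{a96}.

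The heart of the matter is the exact dimensionality of the projection $\mu_\alpha$ and of the conditional measures $\mu_{x,\alpha}$, and this is the step I expect to be the main obstacle. The difficulty is that orthogonal projection generically destroys exact dimensionality, and since we impose \emph{no separation condition} the projected construction has genuine overlaps that must be controlled. What rescues the argument is the statistical self-similarity of the cascade: up to an $M$-adic rescaling and an independent resampling of the environment, the restriction of $\mu$ to a cylinder is a scaled copy of $\mu$, and $\mu_\alpha$ inherits a corresponding structure, as do the fibre measures. I would exploit this by setting up an approximately additive cocycle for $\log\mu_\alpha(B(x,r))$ (respectively for the fibre masses) along the $M$-adic filtration and invoking Kingman's subadditive ergodic theorem, or equivalently by passing to the CP-chain / local-entropy-average machinery of Furstenberg and Hochman--Shmerkin, to conclude that the local dimensions of $\mu_\alpha$ and of $\mu_{x,\alpha}$ converge to a.e.\ constants, the constant for $\mu_{x,\alpha}$ being independent of the fibre $x$ by ergodicity.

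Finally I would assemble the pieces by a disintegration argument. Writing $x=\Pi_\alpha z$ and comparing a Euclidean ball $B(z,r)$ with the product of the projected ball $B(x,r)\subset S_\alpha$ and the fibre slice $B(z,r)\cap\Pi_\alpha^{-1}(x)$, a Besicovitch-type covering estimate gives, up to subexponential errors, $\log\mu(B(z,r))=\log\mu_\alpha(B(x,r))+\log\mu_{x,\alpha}(B(z,r))+o(\log r)$. Dividing by $\log r$ and letting $r\to 0$ shows that the local dimension of $\mu$ is the sum of those of $\mu_\alpha$ and $\mu_{x,\alpha}$ for $\mu$-a.e.\ $z$; by exact dimensionality all three limits are a.e.\ constant, which yields $\dim_{\rm H}\mu=\dim_{\rm H}\mu_\alpha+\dim_{\rm H}\mu_{x,\alpha}$ for $\mu_\alpha$-a.e.\ $x$. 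With the second step in hand this disintegration step is comparatively routine, so the entire weight of the proof sits in establishing exact dimensionality of the projected and conditional measures in the presence of overlaps.
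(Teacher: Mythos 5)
First, a point of reference: the paper itself does not prove this statement; it is quoted verbatim from \cite[Corollary 1]{falconer2014exact}, with the surrounding text only noting that it follows from the exact dimensionality results of \cite[Theorem 3.1]{falconer2014exact} and that no separation condition is needed. Your outline correctly identifies the architecture of that source: exact dimensionality of $\mu$, of $\mu_\alpha$, and of the conditional measures $\mu_{x,\alpha}$ is the real content, your computation $\mu(K_{\mathfrak{i}})=(M^dp)^{-n}Z_{\mathfrak{i}}/Z$ from \eqref{a75} is correct, and your proposed ergodic-theoretic treatment on the symbolic space is close in spirit to what Falconer and Jin actually do (a Peyri\`ere-measure/martingale argument in the style of Feng and Hu's projection entropies, adapted to random cascades).

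However, there is a genuine gap in your final assembly step, which you dismiss as ``comparatively routine.'' Your claimed two-sided estimate
\begin{equation*}
\log\mu(B(z,r))=\log\mu_\alpha(B(x,r))+\log\mu_{x,\alpha}(B(z,r))+o(\log r)
\end{equation*}
does not follow from a Besicovitch-type covering argument, and in fact only one direction of it is routine. An Egorov argument on the fibre masses does give, for $\mu$-a.e.\ $z$, the lower bound $\dim_{\rm H}\mu \geq \dim_{\rm H}\mu_\alpha + \dim_{\rm H}\mu_{x,\alpha}$ (one bounds $\mu(B(z,r))$ above by $\mu_\alpha(B(x,r))$ times a uniform fibre estimate). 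But the reverse inequality cannot be extracted from differentiation of measures: Besicovitch differentiation only yields $\mu(B(z,r))\geq c\,\mu_\alpha(B(x,\delta))\,\mu_{x,\alpha}(B(z,r))$ for some $\delta=\delta(z,r)\ll r$, and if $\delta$ must be taken super-polynomially small in $r$ the projected factor ruins the estimate after dividing by $\log r$. Indeed the implication ``all three measures exact dimensional $\Rightarrow$ dimensions add'' is false in general: lift Lebesgue measure to the graph of a suitable self-affine (Weierstrass-type) function whose occupation measure is exact dimensional of dimension $s>1$; the projection to the $x$-axis is Lebesgue measure (dimension $t=1$), the fibre measures are Dirac masses (dimension $u=0$), all three are exact dimensional, yet $s>t+u$. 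So the additivity genuinely uses the cascade structure: in \cite{falconer2014exact}, following Feng and Hu, the local dimensions of $\mu_\alpha$ and $\mu_{x,\alpha}$ are identified with conditional-entropy ratios on the symbolic space, and the equality $\dim_{\rm H}\mu=\dim_{\rm H}\mu_\alpha+\dim_{\rm H}\mu_{x,\alpha}$ is built into the exact decomposition of entropy into projection and fibre parts, rather than being recovered afterwards by a covering argument. Your sketch would need this entropy identification (or an equivalent mechanism) to close; as written, the weight of the proof does not sit only where you placed it.
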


Taking $\varepsilon =\dim_{\rm H} \mu - \dim_{\rm H}  \mu_\alpha$, it is easy to see that for all $\alpha \in \mathcal{A}_{d,k}$, $\Pi_\alpha$ is almost surely dimension conserving (as in \eqref{furst_dim_cons}) for the fractal percolation set $E$.

Note that the authors of \cite{falconer2014exact} claim dimension conservation almost surely for any fixed direction, but not almost surely for all of them simultaneously. In a following paper \cite{falconer2015dimcons} the same authors obtain dimension conservation for all directional projections at once in the sense of \eqref{weak_dim_cons}.

\begin{theorem}\cite[Theorem 3.4]{falconer2015dimcons}\label{thm:weak_dim_cons}
Almost surely, for all $\alpha \in \mathcal{A}_{d,k}$ $\Pi_\alpha$ is weakly dimension conserving for the fractal percolation set $E$.
\end{theorem}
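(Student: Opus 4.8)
The plan is to deduce everything from the natural measure $\mu$ of Definition~\ref{a77}, which is carried by $E$ and has $\dim_{\rm H}\mu=\dim_{\rm H}E=:s$ (so that $s=d+\log p/\log M$ by Theorem~\ref{a96}). Since a measure on a set bounds its dimension from below, it suffices to produce, for every $\alpha\in\mathcal{A}_{d,k}$ and every $\varepsilon>0$, a set $W\subset S_\alpha$ with $\mathcal{L}_k(W)>0$ on which the conditional measures of $\mu$ along the fibers of $\Pi_\alpha$ have Hausdorff dimension at least $s-k-\varepsilon$. These conditionals are supported on the slices $E\cap\Pi_\alpha^{-1}x$, so \eqref{weak_dim_cons} follows at once. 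Thus I reduce weak dimension conservation to a \emph{uniform in} $\alpha$ lower bound for the dimension of the sliced measures.

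First I would discretize. At level $n$ the approximation $E_n$ is a union of $\#\mathcal{E}_n\approx(M^dp)^n$ cubes of side $M^{-n}$, and for a base point $x\in S_\alpha$ I count $N_n(\alpha,x)$, the number of retained level-$n$ cubes $K_{\mathfrak i}$ whose shadow $\Pi_\alpha(K_{\mathfrak i})$ meets the $M^{-n}$-neighbourhood of $x$. A deterministic $(d-k)$-plane meets of order $M^{(d-k)n}$ grid cubes, so $\mathbb{E}[N_n(\alpha,x)]$ is of order $(M^{d-k}p)^n=M^{(s-k)n}$; the exponent $s-k$ is exactly the conjectured slice dimension, and it is positive precisely because $\dim_{\rm H}E>k$. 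The mechanism is then to build, on a typical fiber, the measure $\nu$ obtained as the weak-$*$ limit of the normalized counting measures $N_n^{-1}\sum\delta$ on the retained cubes meeting the column (this $\nu$ is the conditional measure of $\mu$), and to check a Frostman bound $\nu(B(y,r))\lesssim r^{\,s-k-\varepsilon}$; by the mass distribution principle this gives $\dim_{\rm H}(E\cap\Pi_\alpha^{-1}x)\ge s-k-\varepsilon$.

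For a single fixed pair $(\alpha,x)$ the fluctuations of $N_n(\alpha,x)$ about its mean are governed by the branching structure: conditioning on the first $m$ levels and exploiting the independence of the subtrees, the counts form a martingale-type sequence to which a large deviation estimate applies, yielding a tail bound of the shape $\mathbb{P}\big(N_n(\alpha,x)<M^{(s-k-\varepsilon)n}\big)\le e^{-cM^{\eta n}}$ for some $\eta>0$, on the event of non-extinction of the relevant subtree. For a fixed direction the lower bound itself is already contained in the exact-dimensionality result of \cite{falconer2014exact}, since there $\dim_{\rm H}\mu_{x,\alpha}=\dim_{\rm H}\mu-\dim_{\rm H}\mu_\alpha\ge s-k$; the real work is to re-derive it with an \emph{explicitly exponentially small} exceptional probability that is uniform in the data, which the black-box statement does not provide.

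The hard part will be upgrading ``almost surely for each fixed $\alpha$'' to ``almost surely for all $\alpha$ simultaneously'', since $\mathcal{A}_{d,k}$ is a positive-dimensional compact manifold. The plan is to choose, at each scale $n$, an $M^{-n}$-net $\{\alpha_j\}$ of $\mathcal{A}_{d,k}$ together with an $M^{-n}$-net of base points; the total number of net points grows only polynomially in $M^n$, say like $M^{Cn}$. Applying the tail bound above to every net point and taking a union bound gives total failure probability at most $M^{Cn}e^{-cM^{\eta n}}$, which is summable, so Borel--Cantelli secures the estimate at all net points for all large $n$, almost surely. A stability step then propagates it to every direction: changing $\alpha$ by $M^{-n}$ displaces the shadow of each level-$n$ cube by a comparable amount, so $N_n(\alpha,x)$ is comparable to its value at the nearest net direction. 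It is exactly the $-\varepsilon$ slack in \eqref{weak_dim_cons} that absorbs the net spacing and the discretization error; aiming at the sharp rate $s-k$ for all $\alpha$ at once would not survive the union bound, which is why the theorem carries an arbitrarily small loss rather than an equality.
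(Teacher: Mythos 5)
Your route is genuinely different from the paper's. You are reconstructing, in outline, the discretize-and-union-bound machinery of Peres--Rams and Shmerkin--Suomala: column counts $N_n(\alpha,x)$, martingale large deviations, nets in $(\alpha,x)$ at scale $M^{-n}$, Borel--Cantelli, and a mass distribution principle on the fibers. If completed, that machinery proves the \emph{stronger} Theorem \ref{thm:strong_dim_cons}, but at a heavy technical cost. The paper's proof of Theorem \ref{thm:weak_dim_cons} is much softer: pick $p'$ with $\widetilde{p}=M^{\varepsilon}/M^{d-k}=p\cdot p'$, use the distributional identity $E(\widetilde{p})\stackrel{d}{=}E(p)\cap E(p')$ from \eqref{a81}, so that the intersection of two independent realizations has dimension $k+\varepsilon>k$ by \eqref{a80}; then its projections have positive $\mathcal{L}_k$-measure \emph{simultaneously for all} $\alpha$ (via Theorem \ref{thm:interval} applied to the auxiliary percolation set), and Falconer--Jin's Lemma \ref{a84} converts the hypothesis \eqref{a78}, with $A=E_\omega(p)$, into a positive-measure set of fibers of dimension exceeding $-\log p'/\log M=\dim_{\rm H}E(p)-k-\varepsilon$. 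The simultaneity over $\alpha$ --- the part you correctly identify as the hard step in your plan --- is inherited for free from the projection theorem for the auxiliary set, rather than fought for with nets and union bounds.

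Beyond the difference in route, your sketch has a concrete quantitative gap: the claimed tail bound $\mathbb{P}\big(N_n(\alpha,x)<M^{(s-k-\varepsilon)n}\big)\le e^{-cM^{\eta n}}$ is false as stated. The column above $x$ can go extinct at a bounded level (all level-one cubes meeting it discarded) with probability bounded away from $0$ uniformly in $n$, so the unconditional lower tail does not decay at all, let alone doubly exponentially; and conditioning ``on non-extinction of the relevant subtree'' destroys the independence your martingale large deviation step relies on. The correct form (as in \cite{Rams}, \cite{Peres2016}, \cite{shmerkin_si}) is a bootstrap: one bounds the probability that the count drops below a threshold at scale $n+1$ \emph{given} it exceeded the threshold at scale $n$, and then must separately produce the set of columns that never die --- e.g.\ via Condition $A(\alpha)$ or a second-moment/Fubini argument --- which is precisely how the positive-measure set $W$ in \eqref{weak_dim_cons} is extracted; your net-plus-stability step controls level-$n$ counts near net points but says nothing about the limit fiber at an exact $x$, which is highly discontinuous in $(\alpha,x)$. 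Relatedly, the Frostman bound $\nu(B(y,r))\lesssim r^{s-k-\varepsilon}$ needs count control at \emph{all intermediate scales} inside the column (an upper bound on descendants of each level-$m$ cube together with the global lower bound), not only the top-scale lower bound you describe. These are exactly the difficulties the paper's intersection trick circumvents.
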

Since the proof is beautiful, we sketch its idea below:
\begin{proof}[Sketch of the Proof of Theorem \ref{thm:weak_dim_cons}]
  Fix a
\begin{equation}\label{a83}
  p>\frac{1}{M^{d-k}}.
\end{equation}
and fix a small  $\varepsilon>0$ satisfying $p>M^{\varepsilon}/M^{d-k}$.
In order to study $E(p)$ we  will use $E(p'), E(\widetilde{p})$ where $p'$ and $\widetilde{p}$ are defined by
\begin{equation}\label{a82}
  \widetilde{p}:=\frac{M^\varepsilon}{M^{d-k}}=p \cdot p'.
\end{equation}

We write
$(\Omega,\mathcal{B},\mathbb{P}_{p})$
$(\Omega',\mathcal{B},\mathbb{P}_{p'})$
$(\widetilde{\Omega},\mathcal{B},\mathbb{P}_{\widetilde{p}})$ for the probability space
corresponding to the construction of $E(p), E(p'), E(\widetilde{p})$ respectively. It is easy to see that
\begin{equation}\label{a81}
  E(\widetilde{p})\stackrel{d}{=}E(p)\cap E(p').
\end{equation}
Hence, for two independent realizations $E_\omega(p)$ and $E_{\omega'}(p')$:
\begin{equation}\label{a80}
  \dim_{\rm H} \left(E_\omega(p)\cap E_{\omega'}(p')\right)=
  \frac{\log\left(M^d \cdot \widetilde{p}\right)}{\log M}=
  d+\frac{\log\widetilde{p}}{\log M}=k+\varepsilon.
\end{equation}
The following Lemma appeared as
\cite[Corollary of Lemma 2.1]{falconer2015dimcons}:
\begin{lemma}\label{a84}
Let $A  \subset [0,1]^d$ be  a Borel set and let  $\alpha\in \mathcal{A}_{d,k}$.
Assume that
\begin{equation}\label{a78}
  \mathbb{P}'\left\{\omega':\mathcal{L}_k
\Pi_\alpha\left(A\cap E_{\omega'}(p')\right)>0
\right\}>0.
\end{equation}
Then
\begin{equation}\label{a79}
  \mathcal{L}_k\left\{
  x\in S_\alpha:
  \dim_{\rm H} \left(A\cap \Pi_{\alpha}^{-1}(x)\right)>\frac{-\log p'}{\log M}
  \right\}>0.
\end{equation}
\end{lemma}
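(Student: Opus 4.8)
The plan is to prove the contrapositive, reducing everything to a fibrewise hitting probability via Fubini. Write $s:=\frac{-\log p'}{\log M}$ for the threshold exponent, and for $x\in S_\alpha$ abbreviate the fibre slice $B_x:=A\cap\Pi_\alpha^{-1}(x)$. The elementary but crucial observation is that $x\in\Pi_\alpha\!\left(A\cap E_{\omega'}(p')\right)$ if and only if $B_x\cap E_{\omega'}(p')\neq\emptyset$. After checking that the analytic set $\Pi_\alpha(A\cap E_{\omega'}(p'))$ makes $(x,\omega')\mapsto\ind[B_x\cap E_{\omega'}(p')\neq\emptyset]$ jointly measurable, Fubini gives
\begin{equation}\label{fubini_plan}
  \mathbb{E}'\big[\mathcal{L}_k\big(\Pi_\alpha(A\cap E_{\omega'}(p'))\big)\big]
  = \int_{S_\alpha}\mathbb{P}'\big(B_x\cap E_{\omega'}(p')\neq\emptyset\big)\,d\mathcal{L}_k(x).
\end{equation}
So it suffices to show that the integrand vanishes for $\mathcal{L}_k$-a.e.\ $x$ at which $B_x$ is small: if that held on a full-measure set of $x$, the right-hand side of \eqref{fubini_plan} would be $0$, forcing $\mathcal{L}_k(\Pi_\alpha(A\cap E_{\omega'}(p')))=0$ almost surely and contradicting the hypothesis.

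The engine is a one-line first-moment (covering) estimate, uniform in the direction $\alpha$, that does \emph{not} require the slice of $E(p')$ to itself be a fractal percolation. Since $p'=M^{-s}$, a retained level-$n$ cube occurs with probability $(p')^n=M^{-ns}$, and any set $U$ of diameter $\rho$ meets at most $2^d$ level-$n$ cubes once $M^{-n}\le\rho$; hence
\begin{equation}\label{cover_plan}
  \mathbb{P}'\big(U\cap E_{\omega'}(p')\neq\emptyset\big)\ \le\ 2^d (p')^{n}\ \le\ C\,(\operatorname{diam} U)^{s},\qquad C=2^dM^{s}.
\end{equation}
Summing \eqref{cover_plan} over an economical cover $\{U_i\}$ of $B_x$ yields $\mathbb{P}'(B_x\cap E(p')\neq\emptyset)\le C\sum_i(\operatorname{diam}U_i)^{s}$. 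If $\dim_{\rm H}B_x<s$, then $\mathcal{H}^{s}(B_x)=0$, so the cover can be chosen to make $\sum_i(\operatorname{diam}U_i)^{s}$ arbitrarily small, giving $\mathbb{P}'(B_x\cap E(p')\neq\emptyset)=0$. Feeding this back into \eqref{fubini_plan} proves the threshold form of the statement: the hypothesis forces $\mathcal{L}_k\{x:\dim_{\rm H}(A\cap\Pi_\alpha^{-1}(x))\ge s\}>0$.

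The delicate point—and the step I expect to be the real obstacle—is the boundary case $\dim_{\rm H}B_x=s$ that one must exclude to reach the \emph{strict} inequality asserted in \eqref{a79}. The covering estimate \eqref{cover_plan} delivers an almost-sure miss only when $\dim_{\rm H}B_x<s$ strictly; at the critical exponent it degenerates to a bound of the form $\mathbb{P}'(\mathrm{hit})\le C\,\mathcal{H}^{s}(B_x)$, which is inconclusive, and in fact a slice of dimension exactly $s$ carrying a measure of finite Riesz $s$-energy is hit with positive probability. Upgrading $\ge s$ to $>s$ therefore requires genuinely more than the first moment. I would attempt this either through a second-moment/capacity analysis of the fibrewise hitting probability at the critical dimension, or by exploiting the slack built into the surrounding construction: since $\widetilde p=M^{\varepsilon}/M^{d-k}$ leaves an $\varepsilon$ of room, one can hope to rerun the argument with a slightly larger retention parameter, i.e.\ a codimension strictly below $s$, and thereby convert the non-strict conclusion into the strict one. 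My overall strategy is to establish the threshold cleanly with the covering bound above and then concentrate the remaining effort on this borderline refinement.
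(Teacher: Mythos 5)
First, a point of comparison: the paper itself contains \emph{no} proof of Lemma \ref{a84} --- it is imported verbatim from \cite[Corollary of Lemma 2.1]{falconer2015dimcons} and used as a black box in the sketch of Theorem \ref{thm:weak_dim_cons} --- so your attempt can only be measured against the cited source, whose route (a hitting-probability estimate for $E(p')$ combined with exactly your Fubini slicing) is the same as the one you chose. Within that route, your argument is correct up to one reversed inequality: you want $n$ maximal with $\rho\le M^{-n}$, so that a set of diameter $\rho$ meets at most $2^d$ level-$n$ cubes and $(p')^n\le M^{s}\rho^{s}$, where $s=-\log p'/\log M$. Summing over covers then gives $\mathbb{P}'\bigl(B_x\cap E(p')\neq\emptyset\bigr)\le C\,\mathcal{H}^{s}_\infty(B_x)$, and with your Fubini identity this cleanly yields the \emph{non-strict} conclusion $\mathcal{L}_k\bigl\{x:\dim_{\rm H}(A\cap\Pi_\alpha^{-1}x)\ge s\bigr\}>0$. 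Note that this weaker form is all the surrounding argument actually uses: in the proof of Theorem \ref{thm:weak_dim_cons} the parameter $\varepsilon>0$ is arbitrary, so strictness at the exponent $s$ is immaterial there.

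The borderline issue you flag is real, but neither of your proposed repairs can close it, because the strict statement is false for general Borel $A$ --- and your own remark that a critical slice carrying a measure of finite $s$-energy is hit with positive probability is precisely the seed of a counterexample. By the Hawkes--Lyons criterion, $\mathbb{P}'(B\cap E(p')\neq\emptyset)>0$ is \emph{equivalent} to $B$ having positive capacity for the tree kernel $(p')^{-m}=M^{sm}$, where $m$ is the level of the last common $M$-adic ancestor. Take a spherically symmetric subtree of the $M$-adic tree of $[0,1]^{d-k}$ with $\#T_n\asymp M^{sn}n^2$ (possible since $s<d-k$) and let $B$ be its compact limit set: then $\dim_{\rm H}B\le s$, while the uniform measure on $B$ has tree energy $\asymp\sum_n M^{sn}/\#T_n\asymp\sum_n n^{-2}<\infty$, so $B$ is hit with positive probability. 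Now set $A=B\times[0,1]^k$ with $\Pi_\alpha$ the coordinate projection onto the last $k$ coordinates: for a.e.\ $x$ the tree of cubes meeting the fibre $B\times\{x\}$ is isomorphic to that of $B$, so every fibre is hit with the same positive probability; by Fubini, hypothesis \eqref{a78} holds, yet $\dim_{\rm H}(A\cap\Pi_\alpha^{-1}x)\le s$ for \emph{every} $x$, so \eqref{a79} fails with the strict inequality. Consequently a second-moment or capacity analysis at criticality goes the wrong way (it confirms that critical fibres are hit; it cannot exclude them), and your other suggestion --- rerunning with a slightly larger retention parameter --- changes $s$ and thus proves the $\ge$-version at a nearby exponent, which repairs the \emph{application} (take a smaller $\varepsilon$ in Theorem \ref{thm:weak_dim_cons}) rather than the lemma. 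In short: your proof is complete and essentially the standard one for the correct, non-strict form, and the strict inequality in the statement as transcribed in this survey should be read as $\ge$.
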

The authors of \cite{falconer2015dimcons} use this for a typical (conditioned on non-extinction)
realization $A:=E_\omega(p)$.
Namely,
by \eqref{a80}, the condition \eqref{a79} holds for all $\alpha\in\mathcal{A}_{d,k}$.
Then we obtain the assertion of Theorem \ref{thm:weak_dim_cons}
by \eqref{a79} and the observation:
$$
\frac{-\log p'}{\log M}
=
\frac{\log p}{\log M}+d-k-\varepsilon=\dim_{\rm H} (E(p))-k-\varepsilon.
$$
\end{proof}

The most recent and strongest results on dimension conservation are due to Shmerkin and Suomala \cite{shmerkin_si}, who investigated properties of Spatially Independent martingales (SI-martingales). SI-martingales include, e.g., multiplicative cascade measures, subdivision fractals and Poissonian cut-out measures, but here we focus only on the consequences of their results on fractal percolation, which is of the class of subdivision fractals. The following theorem states dimension conservation both for all orthogonal projections simultaneously and in the strong sense, under necessary conditions.

\begin{theorem}\cite[Corollary 12.10. (iii)]{shmerkin_si}\label{thm:strong_dim_cons}
Suppose that a fractal percolation set $E$ is constructed with homogeneous probabilities and that its expected dimension is greater than $k$ (in other words $p > M^{-(d-k)}$). Then almost surely, conditioned on $E \neq \emptyset$, for all $\alpha \in \mathcal{A}_{d,k}$ the projections $\Pi_\alpha$ are strongly dimension conserving for $E$.
\end{theorem}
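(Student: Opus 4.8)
The plan is to establish the two inequalities $\dim_{\rm H}(E\cap\Pi_\alpha^{-1}x)\le \dim_{\rm H} E-k$ and $\dim_{\rm H}(E\cap\Pi_\alpha^{-1}x)\ge \dim_{\rm H} E-k$ separately, valid (conditioned on $E\neq\emptyset$) simultaneously for all $x$ in a suitable nonempty open set $U\subset S_\alpha$ and for all $\alpha\in\mathcal{A}_{d,k}$ at once. Here $\dim_{\rm H} E=d+\frac{\log p}{\log M}$ by Theorem~\ref{a96}, and the standing hypothesis $p>M^{-(d-k)}$ is precisely $\dim_{\rm H} E>k$.

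For the upper bound I would use a uniform box-counting estimate. Fixing $\alpha$, cover $S_\alpha$ by an $M^{-n}$-net, which has $\asymp M^{kn}$ points, and for each net point $x$ count $N_n(x)$, the number of retained level-$n$ cubes of $E$ whose projection meets $B(x,M^{-n})$. Since a fixed fibre-tube $\Pi_\alpha^{-1}(B(x,M^{-n}))$ meets $\asymp M^{(d-k)n}$ level-$n$ cubes of $Q$, one has $\mathbb{E}[N_n(x)]\asymp (M^dp)^n M^{-kn}=M^{n(\dim_{\rm H} E-k)}$. A large-deviation bound for the branching process $(\#\mathcal{E}_n)$, together with a union bound over the $\asymp M^{kn}$ net points (and over a countable dense set of directions, then a Lipschitz-in-$\alpha$ continuity argument), shows that almost surely $N_n(x)\le M^{n(\dim_{\rm H} E-k+\varepsilon)}$ for all large $n$, uniformly in $x$ and $\alpha$. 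Covering $E\cap\Pi_\alpha^{-1}x$ by these cubes yields $\dim_{\rm H}(E\cap\Pi_\alpha^{-1}x)\le\dim_{\rm B}(E\cap\Pi_\alpha^{-1}x)\le\dim_{\rm H} E-k$ for \emph{every} $x$ and every $\alpha$.

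For the lower bound I would combine the auxiliary-percolation device from the proof of Theorem~\ref{thm:weak_dim_cons} with the exact-dimensionality result of Falconer and Jin \cite{falconer2014exact}. Writing $E(\widetilde p)\stackrel{d}{=}E(p)\cap E(p')$ as in \eqref{a81} with $\widetilde p=M^\varepsilon/M^{d-k}$, relation \eqref{a80} gives $\dim_{\rm H} E(\widetilde p)=k+\varepsilon>k$, so (by a projection theorem for supercritical fractal percolation, valid for all directions) $\mathcal{L}_k(\Pi_\alpha E(\widetilde p))>0$ with positive probability; Lemma~\ref{a84} then produces a set of $x$ of positive $\mathcal{L}_k$-measure with $\dim_{\rm H}(E\cap\Pi_\alpha^{-1}x)>\tfrac{-\log p'}{\log M}=\dim_{\rm H} E-k-\varepsilon$, and letting $\varepsilon\downarrow 0$ along a sequence upgrades this to $\ge\dim_{\rm H} E-k$ for $\mathcal{L}_k$-a.e.\ $x$. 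More cleanly, once one knows that the projected natural measure $\mu_\alpha=\Pi_\alpha\mu$ is absolutely continuous (so $\dim_{\rm H}\mu_\alpha=k$), the Falconer--Jin identity $\dim_{\rm H}\mu_\alpha+\dim_{\rm H}\mu_{x,\alpha}=\dim_{\rm H}\mu$ forces $\dim_{\rm H}\mu_{x,\alpha}=\dim_{\rm H} E-k$ for $\mu_\alpha$-a.e.\ $x$; since $\mu_{x,\alpha}$ is carried by $E\cap\Pi_\alpha^{-1}x$, this gives the matching lower bound for $\mathcal{L}_k$-a.e.\ $x$ in the support of $\mu_\alpha$.

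At this stage equality $\dim_{\rm H}(E\cap\Pi_\alpha^{-1}x)=\dim_{\rm H} E-k$ holds for $\mathcal{L}_k$-a.e.\ $x$, whereas strong dimension conservation demands it for \emph{every} $x$ in a nonempty open set, and simultaneously for all $\alpha$. This is the main obstacle, and it is exactly what the spatially-independent-martingale machinery of Shmerkin and Suomala \cite{shmerkin_si} is designed to overcome. The idea is to show that $\mu_\alpha$ has a density $f_\alpha$ that is H\"older continuous and strictly positive on a nonempty open set $U$, uniformly over all $\alpha\in\mathcal{A}_{d,k}$ at once --- in the plane this is the Peres--Rams theorem (Theorem~\ref{thm:Peres}), and \cite{shmerkin_si} establishes the general-dimensional analogue via Fourier/transversality estimates for the underlying martingale. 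Positivity of $f_\alpha$ on $U$ makes every fibre over $U$ nonempty and $\mu_\alpha$-typical, while the quantitative martingale bounds yield a \emph{uniform} Frostman estimate $\mu_{x,\alpha}(B(y,r))\le C\,r^{\dim_{\rm H} E-k}$ valid for all $x\in U$, not merely for almost every $x$. By the mass distribution principle this gives $\dim_{\rm H}(E\cap\Pi_\alpha^{-1}x)\ge\dim_{\rm H} E-k$ for every $x\in U$, and combined with the upper bound above we obtain equality for all $x\in U$ and all $\alpha$. Promoting the almost-everywhere statement to the everywhere statement, and doing so simultaneously in $\alpha$, is where essentially all the work lies.
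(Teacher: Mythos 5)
You should first be aware that the paper contains no proof of Theorem \ref{thm:strong_dim_cons}: immediately after stating it, the authors write that the result is \cite[Corollary 12.10 (iii)]{shmerkin_si}, remark only that Shmerkin and Suomala generalise the key ideas of Peres--Rams \cite{Peres2016}, and explicitly refer the reader to \cite{shmerkin_si} for the proof. So your sketch can only be compared with that cited argument, and at the level of architecture it does mirror it correctly: a uniform-in-$(x,\alpha)$ upper bound by counting surviving cubes in fibre-tubes, and a lower bound via fibre measures with uniform Frostman estimates, obtained by pushing the Peres--Rams techniques to all $k$-plane projections at once. Your identification of where the difficulty sits (``promoting the almost-everywhere statement to the everywhere statement, simultaneously in $\alpha$'') is also accurate.

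The problem is that, as a proof attempt, the sketch is hollow exactly at that point: the assertions you invoke in the final paragraph --- a H\"older continuous density for $\mu_\alpha$ strictly positive on an open set uniformly in $\alpha$, fibre measures $\mu_{x,\alpha}$ that are \emph{defined and nonzero for every} $x$ in an open set, and a uniform Frostman bound $\mu_{x,\alpha}(B(y,r))\leq C r^{\dim_{\rm H}E-k}$ --- jointly \emph{are} the theorem, while everything you actually argue (the box-counting upper bound and the a.e.\ lower bound via Falconer--Jin) is the routine part, essentially already contained in Theorem \ref{thm:weak_dim_cons}. Two concrete failure points. First, conditional measures obtained by disintegrating $\mu$ over $\Pi_\alpha$ exist only for $\mu_\alpha$-a.e.\ $x$, so to even speak of $\mu_{x,\alpha}$ for \emph{every} $x\in U$ one must construct the fibre measures differently; in \cite{shmerkin_si} they arise as limits of the restrictions of the approximating measures to tubes, with convergence, positivity and the Frostman bound controlled simultaneously in $(x,\alpha)$ by the SI-martingale estimates --- nothing in your sketch produces such objects, and positivity of the density $f_\alpha$ at $x$ only gives a nonempty fibre, not a nonzero fibre measure. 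Second, in your first lower-bound paragraph the ``letting $\varepsilon\downarrow 0$'' step is wrong: Lemma \ref{a84} yields, for each $\varepsilon$, only a set of \emph{positive} (not full) $\mathcal{L}_k$-measure, and the intersection over a sequence $\varepsilon_n\downarrow 0$ may be null or empty, so no a.e.\ statement follows this way (your ``more cleanly'' route via exact dimensionality does repair the a.e.\ claim, but a.e.\ remains strictly weaker than the open-set-everywhere conclusion required by strong dimension conservation). A more minor issue: the concentration needed for your upper bound is not a large-deviation bound for the branching process $\#\mathcal{E}_n$ itself --- the counts $N_n(x)$ in a tube are sums of ancestrally dependent indicators, and one needs inductive subexponential-increment estimates of the type developed in \cite{shmerkin_si} (or a Rams--Simon style argument in the plane) before the union bound over the $M^{kn}$ net points and over directions can close.
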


In \cite{shmerkin_si} the authors generalise the key ideas of \cite{Peres2016} and complement it with many new techniques. We refer to \cite{shmerkin_si} for the proof of the above theorem as well as for an exhaustive study of SI-martingales which exceeds the scope of this work. Also note that many results in \cite{shmerkin_si} are also more general in the sense that those are not only valid for linear projections and intersections with lines (as in usual dimension conservation theorems), but, e.g., for other regular non-linear projections and intersections as well.

\subsection{Visibility} In this subsection we review the main result from
\cite{Arhosalo2012} about the visible parts of fractal percolation sets in $\mathbb{R}^2$. Let $\mathcal{A}$ be the set of straight lines on $\mathbb{R}^2$. Let $\Gamma$ be the natural Radon measure defined on $\mathcal{A}$ defined as follows:  For a $\theta\in[0,\pi)$, we write
$L _\theta$ for the straight line through the origin of angle $\theta$. Then for $A \subset \mathcal{A}$
\begin{equation}\label{a92}
  \Gamma(A):=\int\limits_{\theta\in[0,\pi)}
  \mathcal{L}_1\left(a\in L_{\theta^{\bot}}:a+L_\theta\in A\right)d\theta.
\end{equation}

\begin{definition} \label{a91} Let
$\ell \in\mathcal{A}$. For an $a\in \mathbb{R}^2$ we write
 $\Pi_\ell (a) $ for the orthogonal projection of $a$ to $\ell $. Moreover for $a,b\in\mathbb{R}^2$ we write $[a,b]$ for the straight line segment which connects $a$ to $b$.
Let $F \subset \mathbb{R}^2$ be a compact set and $\ell \in\mathcal{A}$ such that $\ell \cap F= \emptyset $ and $x\in \mathbb{R}^2\setminus F$. We define
\begin{equation}\label{a90}
  V_\ell (F):=\left\{a\in F: \left[a,\Pi_\ell (a)\right]\cap F=\left\{a\right\}\right\},\qquad
  V_x(E):=\left\{a\in F:[a,x]\cap F=\left\{a\right\}\right\}.
\end{equation}
\end{definition}
Marstrand projection theorem says:
\begin{equation}\label{a89}
  \dim_{\rm H}\Pi_{L_\theta}(F)=\min \left\{\dim_{\rm H} F,1\right\}\quad
  \text{ for $\mathcal{L}$-a.a. } \theta\in[0,\pi),
\end{equation}
where $\Pi_{L_\theta}$  is the projection to $L_\theta$ which is the straight line of direction $\theta$ through the origin (as we mentioned above).
For a compact set $F$ the authors of \cite{jarvenpaa2003visible} proved a similar result:
\begin{theorem}[J\"arvenp\"aa,J\"arvenp\"aa,MacManus,O'Neil]
 Let $F \subset \mathbb{R}^2$ be a compact set with $\dim_{\rm H} F \leq 1$.Then for $\Gamma$-almost all $\ell $
and for
$\mathcal{L}^2$-almost all $x$
satisfying
\begin{equation}\label{a87}
 \ell \cap F= \emptyset \text{ and } x\in\mathbf{R}^2\setminus F
\end{equation}
 we have
 \begin{equation}\label{a88}
   \dim_{\rm H} V_\ell (F)=\dim_{\rm H} F\text{ and }
   \dim_{\rm H} V_x=\dim_{\rm H} F.
 \end{equation}
On the other hand if $\dim_{\rm H} F > 1$ then for $\Gamma$-almost all $\ell $
and for
$\mathcal{L}^2$-almost all $x$ satisfying \eqref{a87}
we have
\begin{equation}\label{a86}
  1 \leq \dim_{\rm H} V_\ell(F) \text{ and } 1 \leq \dim_{\rm H} V_x(F).
\end{equation}
\end{theorem}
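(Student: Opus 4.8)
The plan is to reduce both visibility statements to the projection theorem \eqref{a89} and its radial analogue, by observing that the visible part is nothing but a projection restricted to the nearest points of $F$. Fix a line $\ell$ with $\ell\cap F=\emptyset$ and let $\Pi_\ell$ be the orthogonal projection onto $\ell$. Since $[a,\Pi_\ell(a)]$ is the perpendicular segment from $a$ to $\ell$, a point $a\in F$ belongs to $V_\ell(F)$ exactly when no other point of $F$ lies between $a$ and $\ell$ on this perpendicular, i.e.\ when $a$ is, among the points of $F$ on its perpendicular fibre lying on the same side of $\ell$, the one nearest to $\ell$. The single fact I would extract from this is that
\begin{equation*}
  \Pi_\ell\bigl(V_\ell(F)\bigr)=\Pi_\ell(F).
\end{equation*}
Indeed, for every $y\in\Pi_\ell(F)$ the fibre $F\cap\Pi_\ell^{-1}(y)$ is nonempty and compact, hence attains its minimal distance to $\ell$, and that minimiser is visible by definition; so $y$ lies in the image. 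Because $F$ may meet both sides of $\ell$, $\Pi_\ell$ need not be injective on $V_\ell(F)$, but surjectivity is all I will use.

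Next I would convert this onto-property into a dimension bound. Orthogonal projection is $1$-Lipschitz, and Lipschitz maps do not increase the Hausdorff dimension of arbitrary sets, so
\begin{equation*}
  \dim_{\rm H}V_\ell(F)\ \ge\ \dim_{\rm H}\Pi_\ell\bigl(V_\ell(F)\bigr)\ =\ \dim_{\rm H}\Pi_\ell(F);
\end{equation*}
in particular no measurability of $V_\ell(F)$ is required. Marstrand's theorem \eqref{a89}, applied to the compact set $F$, gives $\dim_{\rm H}\Pi_\ell(F)=\min\{\dim_{\rm H}F,1\}$ for Lebesgue-almost every direction, and since $\Gamma$ weights directions by Lebesgue measure the same holds for $\Gamma$-almost every line satisfying \eqref{a87}. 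Together with the trivial bound $\dim_{\rm H}V_\ell(F)\le\dim_{\rm H}F$ coming from $V_\ell(F)\subset F$, this yields $\dim_{\rm H}V_\ell(F)=\dim_{\rm H}F$ when $\dim_{\rm H}F\le1$ and $\dim_{\rm H}V_\ell(F)\ge1$ when $\dim_{\rm H}F>1$.

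For visibility from a point I would repeat the argument verbatim with the radial projection $\pi_x\colon a\mapsto(a-x)/|a-x|$ replacing $\Pi_\ell$. Here the fibres of $\pi_x$ are the rays emanating from $x$, which are one-sided, so $a\in V_x(F)$ iff $a$ is the unique point of $F$ on its ray nearest to $x$; thus $\pi_x$ restricts to a genuine bijection of $V_x(F)$ onto $\pi_x(F)$. Since $x\notin F$ and $F$ is compact, $\mathrm{dist}(x,F)>0$ and $\pi_x$ is Lipschitz on $F$, whence $\dim_{\rm H}V_x(F)\ge\dim_{\rm H}\pi_x(F)$. The radial (point-source) projection theorem---the analogue of \eqref{a89} asserting $\dim_{\rm H}\pi_x(F)=\min\{\dim_{\rm H}F,1\}$ for $\mathcal{L}^2$-almost every $x$---then closes both cases exactly as above.

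The two combinatorial observations---surjectivity for lines and bijectivity for points---are elementary, so the entire weight of the theorem rests on the two projection theorems. Accordingly the main obstacle is the radial projection theorem, which is the genuinely harder, point-source counterpart of Marstrand's result; I would either cite it or reprove it by the same potential-theoretic (energy-integral) method that underlies \eqref{a89}, now carried out with the radial kernel. The only remaining bookkeeping is to confirm that restricting to lines with $\ell\cap F=\emptyset$ and to centres $x\notin F$ does not spoil the almost-everywhere conclusions: the exceptional directions form a Lebesgue-null, hence $\Gamma$-null, family of admissible lines, and the exceptional centres form an $\mathcal{L}^2$-null set, so both statements survive the restriction in \eqref{a87}.
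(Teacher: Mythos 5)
The paper does not actually prove this statement---it is quoted from \cite{jarvenpaa2003visible}---and your argument is essentially the one given in that source: the visible part surjects under the relevant projection onto the projection of $F$ (nearest point on each fibre, respectively ray, is visible), the Lipschitz property of the projection converts surjectivity into the lower bound $\dim_{\rm H} V \geq \dim_{\rm H}(\text{projection of } F)$, and Marstrand's theorem together with its radial analogue, plus the trivial inclusion $V \subset F$, finishes both cases. Your proof is correct; the radial projection theorem you flag is indeed the only ingredient requiring citation (it is classical, going back to Marstrand's 1954 paper and treated in Mattila's book), and your bookkeeping for the restriction to lines and points satisfying \eqref{a87} is sound, since the exceptional directions form a Lebesgue-null set of angles and hence a $\Gamma$-null family of lines.
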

It is an open question if we can reverse the inequalities in \eqref{a86} in general. However, it was proved in \cite{Arhosalo2012} that this is possible in the case of the Fractal percolation sets.
\begin{theorem}[Arhosalo,J\"arvenp\"aa,J\"arvenp\"aa,Rams, Shmerkin]
  Assume that $p>\frac{1}{M}$ then for all straight line $\ell $ satisfying $\ell \cap E= \emptyset $ we have
  \begin{equation}\label{a85}
    \dim_{\rm H} V_\ell (E)=\dim_{\rm B}V_\ell (E)=1\quad \text{ holds for almost all realizations.}
  \end{equation}

\end{theorem}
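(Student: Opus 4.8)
Since $\dim_{\rm H}\le\dim_{\rm B}$ always holds, it suffices to establish the two bounds $\dim_{\rm H} V_\ell(E)\ge 1$ and $\dim_{\rm B} V_\ell(E)\le 1$ for every admissible $\ell$ simultaneously; then $1\le\dim_{\rm H} V_\ell(E)\le\dim_{\rm B} V_\ell(E)\le 1$ forces all the quantities in \eqref{a85} to equal $1$. The lower bound is the easy half. The key observation is that $\Pi_\ell$ maps $V_\ell(E)$ \emph{onto} all of $\Pi_\ell(E)$: for each $y\in\Pi_\ell(E)$ the fibre $E\cap\Pi_\ell^{-1}(y)$ is nonempty and compact, hence contains a point nearest to $\ell$, and that point lies in $V_\ell(E)$ by the definition \eqref{a90}. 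As an orthogonal projection is $1$-Lipschitz and does not raise Hausdorff dimension,
\[
\dim_{\rm H} V_\ell(E)\ \ge\ \dim_{\rm H}\Pi_\ell\left(V_\ell(E)\right)\ =\ \dim_{\rm H}\Pi_\ell(E).
\]
Because $p>\tfrac1M$, Condition $A(\alpha)$ holds for every direction (as recorded after Theorem \ref{thm:interval}), so almost surely \emph{every} projection $\Pi_\ell(E)$ contains an interval and thus has dimension $1$. This yields $\dim_{\rm H} V_\ell(E)\ge 1$ for all $\ell$ at once.

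\textbf{Upper bound: the reduction to a single estimate.}
Fix $s>1$; it is enough to show that almost surely, for all $\ell$ and all large $n$, the set $V_\ell(E)$ can be covered by at most $M^{sn}$ squares of side $M^{-n}$. Rotate coordinates so that $\ell$ is horizontal, and for each fibre let $f(y)$ denote the height of the $E$-point nearest to $\ell$ on that fibre; then $V_\ell(E)$ is exactly the graph of $f$ over $\Pi_\ell(E)$, carrying one visible point per fibre. Covering the fat $1$-dimensional base $\Pi_\ell(E)$ by the $N_n\le CM^n$ horizontal $M^{-n}$-intervals it meets, the standard graph estimate bounds the covering number of $V_\ell(E)$ by $N_n+M^n V_n$, where $V_n=\sum_I\mathrm{osc}_I f$ is the total oscillation of $f$ at scale $M^{-n}$. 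Everything therefore reduces to the single estimate $V_n\le M^{o(n)}$; equivalently, the number of level-$n$ squares containing the lowest $E$-point on some fibre must grow only like $M^{n(1+o(1))}$, rather than like the ambient $(M^2p)^n$.

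\textbf{Controlling $V_n$ via first crossings, uniformly in $\ell$.}
I would reinterpret a square $K$ meeting $V_\ell(E)$ as a \emph{first-crossing} event: $K$ carries a visible point only if some fibre through $K$ stays free of $E$ all the way down to $\ell$, i.e.\ only if the projections $\Pi_\ell(E\cap K')$ of the surviving squares $K'$ lying below $K$ in the same vertical strip do not yet cover $\Pi_\ell(K)$. By self-similarity each surviving square paints a scaled copy of $\Pi_\ell(E)$, which by Condition $A(\alpha)$ contains an interval and hence occupies a uniformly positive proportion of the width of the strip. Consequently the probability that a fibre remains clear after passing many surviving squares decays geometrically, which should force the expected number of ``lowest'' squares per strip to be $O(1)$ and give $\mathbb{E}[V_n]=M^{o(n)}$. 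Passing from this to an almost-sure bound \emph{uniform in} $\ell$ is the crux: one discretises the one-parameter family of directions, runs the large-deviation machinery of \cite{Rams, simon2014projections} behind Theorem \ref{thm:interval} to obtain exponentially small exceptional probabilities for each discretised direction, and then transfers the estimate to all $\ell$ using that $f$, and with it $V_n$, varies in a controlled way as $\ell$ moves.

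\textbf{The main obstacle.}
The genuine difficulty is precisely this uniform first-crossing estimate, and two features make it delicate. First, the percolation grid is not aligned with the tilted fibres, so a single surviving square only partially blocks a strip, and the ``uniformly positive proportion'' above must be extracted from the interval property of $\Pi_\ell(E\cap K)$ rather than from any clean independence. Second, the squares stacked in one strip share ancestors, so their survival and their projections are hierarchically correlated rather than independent. Establishing geometric decay of the clear-fibre probability in the presence of these correlations, with constants uniform over \emph{all} directions $\ell$, is where the large-deviation analysis has to do its real work; this is the step I expect to be hardest.
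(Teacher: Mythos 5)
The survey itself gives no proof of this theorem --- it is quoted from \cite{Arhosalo2012} without argument --- so your proposal can only be judged on its own completeness, and there it falls short. What you do have is correct: the lower bound is complete (the nearest-point-on-each-fibre argument showing $\Pi_\ell(V_\ell(E))=\Pi_\ell(E)$ via \eqref{a90}, combined with intervals in all projections simultaneously, is sound), with two small caveats: Condition $A(\alpha)$ does \emph{not} hold for the axis directions even in the homogeneous case (Section 4 of the paper records this), so those two directions must be covered separately by \cite{Falconer1992}, which works since $p>1/M$; and when $\ell$ passes through a gap of $E$, the set $V_\ell(E)$ is a union of \emph{two} one-sided lowest-point graphs, which harmlessly doubles your covering bound. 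The reduction of the upper bound to controlling level-$n$ visible squares is also a correct framing.

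The genuine gap is that the entire content of the theorem --- the upper bound $\dim_{\rm B} V_\ell(E)\le 1$, uniformly over all lines $\ell$, almost surely --- remains a plan rather than a proof, as your own phrases (``should force'', ``I expect to be hardest'') concede. Concretely, three things are missing. First, the geometric decay of the clear-fibre probability requires, for a fixed fibre point, that each retained square below blocks it with probability bounded below \emph{uniformly} in the direction $\alpha$ and in the relative position of the fibre inside the square; Condition $A(\alpha)$ is a statement about the operator $F_\alpha$ applied to indicator functions and does not directly yield this pointwise covering probability --- extracting it, and decoupling the blocking events despite the stacked squares sharing ancestors (one must pass to disjoint independent subtrees), is the real work. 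Second, the transfer to all $\ell$ simultaneously cannot lean on continuity: the lowest-point function $f$, and with it $V_\ell(E)$, is genuinely discontinuous in $\ell$ (a fibre shielded for $\ell$ can be exposed for an arbitrarily close $\ell'$), so ``$V_n$ varies in a controlled way as $\ell$ moves'' is precisely a claim to be proven, not a regularity fact available after discretization. Third, your intermediate quantity $V_n=\sum_I \mathrm{osc}_I f$ is strictly stronger than needed and may well fail to be $M^{o(n)}$ even when the theorem holds: since the base $\Pi_\ell(E)$ has gaps and $f$ jumps across them, the total oscillation can vastly exceed the covering number of the graph, so the robust route is to bound directly the number of level-$n$ squares meeting $V_\ell(E)$ via the shielding count, which is how the argument in \cite{Arhosalo2012} is organized. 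As it stands, the proposal establishes the easy half and a sensible strategy for the hard half, but not the hard half itself.
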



 \section{New results}
As mentioned above,
Rams and Simon \cite{Rams} proved in Theorem \ref{thm:interval} that on the plane,  a fractal percolation having expected Hausdorff dimension greater than $1$, has an interval in its orthogonal projections of all of those directions in which condition $A(\alpha)$ holds (conditioned on non-extinction). Then the authors of \cite{Rams} proved that in the case of homogeneous probabilities $A(\alpha)$ holds for all directions but the directions of coordinate axes (which exceptional  projections had been treated by Grimmet and Falconer \cite{Falconer1992}). Now we point out that in the case of inhomogeneous  probabilities this does not always hold.

\begin{theorem}\label{newResults}
\
\begin{enumerate}
	\item[(i)] For any $N \in \mathbb{N}\setminus \{0\}$ there are $M$ and $\mathbf{p}$ such that a.s. $\dim_{\rm H}  E(M, \mathbf{p}) > 1$, if $E \neq \emptyset$, but
	$$
	\#\big\{ \alpha\in [0, \pi] \ :\ int(\Pi_\alpha E) = \emptyset\big\} \geq N.
	$$
	\item[(ii)] Moreover, there are $M$ and $\mathbf{p}$ (given explicitly) such that a.s. $E(M, \mathbf{p})$ has no exceptional direction in $\alpha\in(\pi/4,\pi/2)$ in the above sense if $E \neq \emptyset$, but $\alpha = 0$ is exceptional.
\end{enumerate}
\end{theorem}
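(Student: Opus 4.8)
The plan is to realise both statements with \emph{deterministic} support patterns: I choose probability vectors $\mathbf{p}$ whose entries take only the two values $0$ and $p$, so that $E$ is almost surely contained in the self-similar attractor $K_D$ generated by $\{S_{\mathbf{i}}:\mathbf{i}\in D\}$, where $D:=\{\mathbf{i}\in\mathcal{I}:p_{\mathbf{i}}=p\}$. Two opposing mechanisms are used. To \emph{force} a direction $\alpha$ to be exceptional I arrange that all retained cells project, in direction $\alpha$, onto fewer than $M$ distinct positions; to \emph{forbid} exceptional behaviour I verify Condition $A(\alpha)$ and invoke Theorem~\ref{thm:interval}. The requirement $\dim_{\rm H}E>1$ is in every case secured through Theorem~\ref{a96} by keeping $\sum_{\mathbf{i}}p_{\mathbf{i}}=p\cdot\#D>M$, which simultaneously guarantees $\mathbb{P}(E\neq\emptyset)>0$. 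For the forcing mechanism, note that $\Pi_\alpha\circ S_{\mathbf{i}}$ acts on the coordinate $t$ of the line $S_\alpha$ by $t\mapsto t/M+t_{\mathbf{i}}(\alpha)/M$, where $t_{\mathbf{i}}(\alpha):=j\cos\alpha+i\sin\alpha$ for $\mathbf{i}=(i,j)$. Hence $\Pi_\alpha K_D$ is the attractor of a self-similar system of ratio $1/M$ whose number of \emph{distinct} maps is $v(\alpha):=\#\{t_{\mathbf{i}}(\alpha):\mathbf{i}\in D\}$, so $\dim_{\rm H}\Pi_\alpha E\le\dim_{\rm H}\Pi_\alpha K_D\le \log v(\alpha)/\log M$. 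Whenever $v(\alpha)<M$ this is strictly below $1$, forcing $\mathrm{int}(\Pi_\alpha E)=\emptyset$ deterministically.

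\textbf{Part (i).} Fix $N$ distinct rational directions $\alpha_1,\dots,\alpha_N$, write each reduced slope as $\tan\alpha_k=q_k/r_k$, and set $Q:=\max_k(q_k+r_k)$. I choose $M:=Q^2+2$ and let $D:=\{0,\dots,Q\}^2$ be the corner sub-grid. For each $k$ the numbers $t_{\mathbf{i}}(\alpha_k)$ are proportional to $jr_k+iq_k\in\{0,1,\dots,Q(q_k+r_k)\}\subseteq\{0,\dots,Q^2\}$, hence $v(\alpha_k)\le Q^2+1=M-1<M$, so each $\alpha_k$ is exceptional by the forcing mechanism. On the other hand $\#D=(Q+1)^2>Q^2+2=M$, so taking $p$ close to $1$ gives $\dim_{\rm H}E=\log(p(Q+1)^2)/\log M>1$ by Theorem~\ref{a96}. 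This yields at least $N$ exceptional directions while the dimension stays above $1$; the construction is essentially routine once the counting bound is in place.

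\textbf{Part (ii).} Here I take the explicit choice $M=3$ and $D=\{0,1,2\}\times\{0,2\}$, i.e.\ both outer columns with all their rows at probability $p$ and the middle column at probability $0$. For $\alpha=0$ the projected positions are the column indices $\{0,2\}$, so $v(0)=2<3$ and $\alpha=0$ is exceptional by the forcing mechanism; in fact $\Pi_0 E$ lies in the middle-third Cantor set of dimension $\log 2/\log 3$. Any $p>1/2$ gives $\sum p_{\mathbf{i}}=6p>3=M$, hence $\dim_{\rm H}E>1$. It remains to rule out exceptional directions in $(\pi/4,\pi/2)$: there $v(\alpha)=6$, so the containment argument is silent, and instead I would verify Condition $A(\alpha)$ for this six-cell configuration throughout the open interval and apply Theorem~\ref{thm:interval} to obtain $\mathrm{int}(\Pi_\alpha E)\neq\emptyset$ almost surely, simultaneously for all such $\alpha$.

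\textbf{Main obstacle.} The substantive difficulty lies entirely in Part (ii): checking Condition $A(\alpha)$ uniformly on $(\pi/4,\pi/2)$ for the explicit small configuration. This amounts to analysing how the two retained columns project onto and cover a typical line of direction $\alpha$, and showing that the expected number of level-$n$ cells met by such a line grows exponentially at a rate that is positive \emph{uniformly} in $\alpha$ over compact sub-intervals, with $p$ (which must exceed $1/2$ for the dimension, but may have to be taken close to $1$) large enough to make the relevant covering supercritical. The endpoint $\pi/4$ is exactly where this uniform estimate is expected to degenerate, which explains the restriction to the open interval and the fact that $(0,\pi/4]$ is left untreated.
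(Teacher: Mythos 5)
Your part (i) is correct, but it proves the statement by a genuinely different and more elementary mechanism than the paper. The paper (Proposition \ref{0114}) works with the single random Sierpi\'nski carpet of Example \ref{a72} ($M=3$, one cell deleted) and makes a rational direction $\alpha$ exceptional \emph{probabilistically}: by the Manning--Simon result (Theorem \ref{0117}) the slices of the deterministic carpet in direction $\alpha$ have box dimension strictly below $\log 8/\log 3-1$, so for $p$ in a narrow window $(3/8,p_\alpha)$ a first-moment (Markov) estimate shows $\mathbb{E}\big(\#\mathcal{E}_{\alpha,x,n}\big)\to 0$, whence a.s.\ a dense set of lines $\ell_\alpha(x)$ misses $E$ and $\Pi_\alpha E$ has empty interior; taking $p<\min_k p_{\alpha_k}$ handles $N$ directions at once with $M=3$ fixed. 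Your construction instead forces exceptionality \emph{deterministically}: with $D=\{0,\dots,Q\}^2$ inside an $M=Q^2+2$ grid, the projected translation parameters $iq_k+jr_k$ take at most $Q(q_k+r_k)+1\le Q^2+1=M-1$ values, so $\Pi_{\alpha_k}E\subset\Pi_{\alpha_k}K_D$ has dimension at most $\log(M-1)/\log M<1$ and hence empty interior \emph{surely}, for every $p$, while $p$ near $1$ and $\#D=(Q+1)^2>M$ secure $\dim_{\rm H}E>1$ via Theorem \ref{a96}. Your counting bound and the containment $E\subset K_D$ are both sound, so this part is complete; what the paper's subtler route buys is that exceptional directions can occur even when the deterministic envelope projects onto a full interval, i.e.\ the phenomenon is genuinely probabilistic there, whereas your $M$ grows with $N$ and the obstruction is visible already in the support pattern.

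In part (ii), however, there is a genuine gap: the entire content of the claim is the verification of Condition $A(\alpha)$ on $(\pi/4,\pi/2)$, and you explicitly leave it unproved (``I would verify Condition $A(\alpha)$\dots''). Moreover your proposed route misidentifies the difficulty: Conditions $A(\alpha)$ and $B(\alpha)$ are deterministic statements about the operator $F_\alpha$ of \eqref{0336}, not uniform large-deviation estimates, and no strengthening of $p$ towards $1$ is needed. The paper's missing idea is to verify the stronger Condition $B(\alpha)$ (Definition \ref{Balpha}) by exhibiting the eigenfunction $f_\alpha$ of $F_\alpha$ with eigenvalue $\sum_{\mathbf{i}}p_{\mathbf{i}}/M=2p>1$ explicitly: since the carpet of Example \ref{a73} is (in law, up to your transposition of rows and columns) the unit interval times the triadic Cantor set, $f_\alpha$ is the density of the angle-$\alpha$ projection of $\mathcal{L}_1\times(\text{Cantor measure})$, and formula \eqref{210} shows it is a Cantor-function ramp, a flat middle part, and a symmetric ramp---H\"older continuous and strictly positive on ${\rm int}(\Pi_\alpha Q)$ for \emph{every} $\alpha\in(\pi/4,\pi/2)$ and every $p>1/2$. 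Lemma \ref{a97} then yields $B(\alpha)$, hence $A(\alpha)$, and \cite[Theorem 3 and Proposition 9]{Rams} gives intervals in $\Pi_\alpha E$ simultaneously for all such $\alpha$ (with $\alpha=\pi/2$ covered by \cite{Falconer1992} and $(\pi/2,3\pi/4)$ by symmetry); in particular, contrary to your closing remark, nothing degenerates as $\alpha\downarrow\pi/4$ inside the open interval, and your worry about taking $p$ close to $1$ is unfounded. Without this (or some substitute) verification, your part (ii) establishes only the easy half, namely that $\alpha=0$ is exceptional.
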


From the above theorem we see that when a fractal percolation set is constructed with inhomogeneous probabilities not all of its projections are as regular as in Theorem \ref{thm:interval}. Nevertheless, we also propose a way of extending Theorem \ref{thm:interval} and make a corresponding conjecture which, if shown to be true, would imply the existence of intervals for all directions but a set of Hausdorff dimension $0$ for fractal percolation sets constructed with inhomogeneous probabilities. Below we prove Theorem \ref{newResults}.

\subsection{Randomized Sierpi\'nski carpet: exceptional directions for the existence of intervals}\label{noInt}
The most natural random Sierpi\'nski carpet is defined as follows:
\begin{example}\label{a72}
 Let $E_n$ be the $n$-th approximation of the fractal percolation set $E = E(d,M,\mathbf{p})$ constructed with $d=2$, $M=3$ and probabilities
\begin{equation*}
\begin{cases}
p_{1,1} = 0,\\
p_{i,j} = p > 3/8\ \text{ for $(i,j)\neq (1,1)$.}
\end{cases}
\end{equation*}
\end{example}

By \eqref{a95} we have $\dim_{\rm H} E > 1$ a.s. given $E\neq \emptyset$. In this subsection we investigate both the Sierpi\'nski carpet $\Lambda$, which is a deterministic set, and the random percolation set $E$ defined in Example \ref{a72}. In addition, let $\Lambda_n$ stand for the $n$-th approximation of $\Lambda$. The results of this section are based on the paper of Manning and the first author \cite{manning2013dimension}. For $x\in[-\cot(\alpha),1]$ let $\Lambda_{\alpha,x}$ stand for the slice of $\Lambda$ in direction $\alpha$ through $x$, that is
\[
  \Lambda_{\alpha,x}=\Lambda\cap \ell_\alpha(x),
\]
where, as we already mentioned, $\ell_\alpha(x)$ is the line making angle $\alpha$ with the $x$-axis, and going through $x$.

\begin{theorem}\label{0117}\cite[Theorem 9]{manning2013dimension}
For any fixed $\alpha \in [0,\pi/2]$ such that $\tan (\alpha)\in \mathbb{Q}$ and for Lebesgue almost all $x\in[-\cot(\alpha),1]$ we have
\[
\dim_{\rm B}    \Lambda_{\alpha,x}=d_{\alpha}<\frac{\log 8}{\log 3}-1,
\]
where $\dim_{\rm B} $ stands for the box dimension and $d_{\alpha}$ is a constant not depending on $x$.
\end{theorem}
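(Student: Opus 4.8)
The plan is to reduce $\dim_{\rm B}\Lambda_{\alpha,x}$ to the exponential growth rate of the number
\[
  N_n(t):=\#\{\mathfrak i\in\mathcal E_n(\Lambda) : K_{\mathfrak i}\cap\ell\neq\emptyset\}
\]
of retained level-$n$ cylinders of $\Lambda$ that the line $\ell=\ell_\alpha(x)$ meets, where I parametrise $\ell$ by its intercept $t$ (an affine function of the parameter $x$, so ``Lebesgue-a.e. $x$'' and ``Lebesgue-a.e. $t$'' coincide). Since the slice $\Lambda_{\alpha,x}$ is covered by the $N_n(t)$ level-$n$ cylinders it meets, each of side $3^{-n}$, while any $3^{-n}$-ball centred on $\ell$ meets only boundedly many of them, one gets $\dim_{\rm B}\Lambda_{\alpha,x}=\lim_{n\to\infty}\frac{\log N_n(t)}{n\log 3}$ whenever the limit exists. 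First I would record the renormalisation: if $\ell$ enters the retained subsquare in column $i$, row $j$, then in the rescaled coordinates of that subsquare $\ell$ is again a line of slope $s=\tan\alpha$ with new intercept $t\mapsto 3t+si-j$, so that $N_n(t)=\sum N_{n-1}(3t+si-j)$, the sum running over the retained subsquares met by $\ell$ (i.e. over all met subsquares except the central one).

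The key use of $\tan\alpha=q/p\in\mathbb Q$ is that this renormalisation has a finite Markov structure. Restricting to intercepts $t$ for which $\ell$ actually meets $[0,1]^2$ confines $t$ to a bounded interval $I$, and the combinatorial type of the crossing (which of the nine subsquares are met, hence which retained ones) changes only when $\ell$ passes through a grid vertex, i.e. at the finitely many breakpoints $t=(pj-qi)/(3p)\in\frac1{3p}\mathbb Z\cap I$. These partition $I$ into intervals $I_1,\dots,I_K$ on which the crossing type is constant; since each branch $t\mapsto 3t+si-j$ is affine of slope $3$ and sends $\frac1{3p}\mathbb Z$ into $\frac1p\mathbb Z$, it carries every $I_k$ onto a union of partition intervals, so $\{I_k\}$ is a Markov partition for the (multivalued, expanding) renormalisation map. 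Encoding it by the nonnegative integer matrix $A$, with $A_{k\ell}$ the number of retained branches sending $I_k$ into $I_\ell$, gives $N_n(t)\asymp(A^n\mathbf 1)_{k(t)}$, so by Perron--Frobenius $d_\alpha=\log\lambda(A)/\log 3$, where $\lambda(A)$ is the spectral radius. Passing to the primitive recurrent component and using that a piecewise-linear expanding Markov map carries a unique absolutely continuous invariant measure equivalent to Lebesgue, the limit exists and equals this constant for Lebesgue-a.e. $t$, which is the asserted independence of $x$.

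It remains to prove the strict inequality $\lambda(A)<8/3$, equivalently $d_\alpha<\frac{\log(8/3)}{\log 3}=\frac{\log 8}{\log3}-1$, and this is the heart of the matter. The transparent prototype is the horizontal slice $\alpha=0$: there every met column recurses identically, so $N_n$ is literally a product $\prod_{m<n}b_m$ with $b_m\in\{2,3\}$ according as the ternary digit of the height is $1$ or not, whence by ergodicity of $t\mapsto 3t\bmod 1$ the rate is the geometric mean $2^{1/3}3^{2/3}$, and strict AM--GM (the factors $2$ and $3$ are unequal) gives $2^{1/3}3^{2/3}<\tfrac13\cdot2+\tfrac23\cdot3=\tfrac83$. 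The benchmark $8/3$ is exactly this annealed value $\dim_{\rm H}\Lambda-1$ from \eqref{a95}, attained for Lebesgue-a.e. direction; the content of the theorem is that \emph{every} rational direction falls strictly below it. For general $\tan\alpha=q/p$ the branches no longer share a common future, so the clean product collapses to the genuinely matricial $\lambda(A)$, and the obstacle is to show that the Perron root of this explicit, slope-dependent matrix stays strictly beneath the arithmetic mean $8/3$ --- morally a strict-concavity (quenched $<$ annealed) phenomenon forced by the non-constancy of the per-step retained-crossing counts, but one whose rigorous proof requires the careful analysis of the transition matrices carried out in \cite{manning2013dimension}.
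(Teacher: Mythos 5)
You are proving a theorem that the survey itself only quotes from \cite{manning2013dimension} (no proof is given in the paper), so the benchmark is the Manning--Simon argument. Your reduction of $\dim_{\rm B}\Lambda_{\alpha,x}$ to the growth rate of the cylinder count $N_n(t)$, and the renormalisation $N_n(t)=\sum N_{n-1}(3t+si-j)$ with breakpoints in $\frac{1}{3p}\mathbb{Z}$, are fine. The fatal step is the passage to a \emph{fixed} finite matrix with $N_n(t)\asymp (A^n\mathbf{1})_{k(t)}$. Each branch $t\mapsto 3t+si-j$ has slope $3$, so it maps $I_k$ \emph{onto} a union of roughly three partition intervals, never \emph{into} a single $I_\ell$; hence $A_{k\ell}$ as you define it (``branches sending $I_k$ into $I_\ell$'') is not even well defined, and, more importantly, for $n\ge 2$ the count $N_n(t)$ is \emph{not} constant on $I_k$: it depends on which refined subinterval each branch image $b(t)$ lands in, i.e.\ on the full level-$n$ itinerary of $t$. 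The correct object is $N_n=(L^n\mathbf{1})(t)$ for the transfer operator $Lg(t)=\sum_{\text{branches at }t}g(b(t))$, which is a matrix \emph{cocycle} over the expanding base map; its a.e.\ growth rate is a top Lyapunov exponent (via Kingman/Oseledets plus ergodicity of the base, which also gives the constancy in $x$), not $\log\lambda(A)$ for a single matrix. A self-check exposes the error: every level-$n$ square is met by an interval of intercepts of the \emph{same} length $(1+s)3^{-n}$, so $\mathbb{E}_t N_n=c\,(8/3)^n$ exactly; if $N_n(t)$ took only the $K$ values $(A^n\mathbf{1})_k$ on intervals of fixed positive measure, this identity would force $(A^n\mathbf{1})_k\asymp(8/3)^n$ for some $k$, hence $\dim_{\rm B}\Lambda_{\alpha,x}=\frac{\log 8}{\log 3}-1$ on a set of $x$ of positive Lebesgue measure --- precisely what the theorem denies. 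Your own $\alpha=0$ prototype already displays the true structure: there $N_n(t)=\prod_m b_m$ with digit-dependent factors takes exponentially many values across $t$, which is incompatible with any representation $(A^n\mathbf{1})_{k(t)}$ with finitely many states.

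The second gap is that the strict inequality $d_\alpha<\frac{\log 8}{\log 3}-1$ --- the actual content of the theorem, as you yourself note --- is not proved but deferred: your closing sentence leans on ``the careful analysis of the transition matrices carried out in \cite{manning2013dimension}''. So even after repairing the framework, the proposal establishes neither the a.e.\ constancy by a valid mechanism nor the strict bound. The route actually available (and in outline the one of \cite{manning2013dimension}) uses rationality as follows: since $\tan\alpha\in\mathbb{Q}$, the relative phases $x+k\cot(\alpha)\bmod 1$ of the crossings all lie in a single coset of a finite cyclic group $\frac{1}{v}\mathbb{Z}\bmod 1$, so one tracks a finite count vector indexed by these phase classes, updated by finitely many nonnegative integer matrices selected by the ternary-type digits of the intercept; Lebesgue measure on the intercept is ergodic for this driving map, Kingman's theorem yields the a.e.\ constant $d_\alpha$ as a Lyapunov exponent, and strictness is a quenched-versus-annealed (strict Jensen/entropy-drop) estimate exploiting that the per-level number of retained met subsquares is genuinely non-constant along typical orbits (the line crosses central holes with positive frequency). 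Your AM--GM computation at $\alpha=0$ is exactly the degenerate scalar instance of this mechanism and is correct as far as it goes, but the matricial strictness in general rational directions is what must be --- and in your proposal is not --- proved.
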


\begin{proposition}\label{0114}
  For any $\alpha \in [0,\pi/2]$ such that $\tan \alpha\in \mathbb{Q}$ there exists a  $p_\alpha > 3/8$ such that
for $p\in\left(\frac{3}{8},p_\alpha\right)$
 the projected percolation set $\Pi_\alpha E$ has empty interior almost surely.
\end{proposition}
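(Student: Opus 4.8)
The plan is to exploit that the random set is deterministically contained in the carpet, $E\subseteq\Lambda$ (every retained square avoids the central digit, because $p_{1,1}=0$), and to convert the thinness of the slices of $\Lambda$ furnished by Theorem \ref{0117} into the statement that a typical fibre misses $E$ altogether. Throughout I identify the projection coordinate with the parameter $x$ of the fibre $\ell_\alpha(x)$, so that $\Pi_\alpha E$ has empty interior if and only if $\{x:\ell_\alpha(x)\cap E\neq\emptyset\}$ does. In fact I would aim for the stronger conclusion that this set has Lebesgue measure zero almost surely, which of course forces empty interior.

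First I would fix the threshold. By Theorem \ref{0117} the a.e.\ slice box dimension satisfies $d_\alpha<\frac{\log 8}{\log 3}-1=\log_3(8/3)$, and since $3^{-(\log_3 8-1)}=3/8$ this yields $3^{-d_\alpha}>3/8$. I therefore set $p_\alpha:=3^{-d_\alpha}>3/8$, fix $p\in(3/8,p_\alpha)$, and choose $\varepsilon>0$ small enough that $p\cdot 3^{d_\alpha+\varepsilon}<1$.

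The heart of the argument is a first moment estimate along a fixed fibre. Let $N_n(x)$ be the number of level-$n$ squares of $\Lambda_n$ that $\ell_\alpha(x)$ meets; I read Theorem \ref{0117} as asserting that $d_\alpha$ is exactly the base-$3$ exponential growth rate of $N_n(x)$ for Lebesgue-a.e.\ $x$, so that $N_n(x)\le 3^{n(d_\alpha+\varepsilon)}$ for all large $n$. Since each square of $\Lambda_n$ involves only non-central digits, it survives into $E_n$ with probability exactly $p^n$; hence if $G_n(x)$ counts the level-$n$ squares of $E_n$ meeting $\ell_\alpha(x)$, then $\mathbb{E}[G_n(x)]=N_n(x)\,p^n\le (3^{d_\alpha+\varepsilon}p)^n\to 0$. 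Because the events $\{G_n(x)\ge 1\}$ are nested decreasing and $\{\ell_\alpha(x)\cap E\neq\emptyset\}=\bigcap_n\{G_n(x)\ge 1\}$ (a decreasing intersection of nonempty compacta being the only way the slice of $E$ survives), I get $\mathbb{P}\big(\ell_\alpha(x)\cap E\neq\emptyset\big)\le\lim_n\mathbb{E}[G_n(x)]=0$ for a.e.\ $x$.

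Finally I would integrate out $x$. By Tonelli,
\[
\mathbb{E}\big[\mathcal{L}_1(\Pi_\alpha E)\big]=\int_{-\cot\alpha}^{1}\mathbb{P}\big(\ell_\alpha(x)\cap E\neq\emptyset\big)\,dx=0,
\]
so $\mathcal{L}_1(\Pi_\alpha E)=0$ almost surely, and in particular $\Pi_\alpha E$ has empty interior. The step I expect to demand the most care is the first moment input: one must ensure that Theorem \ref{0117} really controls the count $N_n(x)$ of approximating squares pierced by the line, and not merely the box dimension of the abstract slice set, since a priori a line could cross many level-$n$ squares whose deeper refinements are empty. It is precisely the strict inequality $d_\alpha<\log_3(8/3)$ that opens the gap $p_\alpha>3/8$ in which the expected number of surviving crossing squares decays geometrically; without strictness the threshold would collapse to $3/8$ and the estimate would be vacuous.
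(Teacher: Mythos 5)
Your proposal is correct and runs on the same engine as the paper's proof: a first moment bound $\mathbb{E}[G_n(x)]\leq N_n(x)\,p^n\to 0$ along a fixed fibre, fed by Theorem \ref{0117} and finished with Markov's inequality, exactly as in the paper (where $p_\alpha=\frac{3}{8}\frac{1}{1-\varepsilon_\alpha^2}$ plays the role of your $3^{-d_\alpha}$, up to an inessential choice of margin). You differ in two places. First, the endgame: the paper deduces empty interior by observing that the a.e.\ statement $\mathbb{P}\big(E\cap\ell_\alpha(x)\neq\emptyset\big)=0$ holds simultaneously for a countable \emph{dense} set of good $x$, so a.s.\ a dense set of fibres misses $E$; you instead integrate over $x$ via Tonelli to get $\mathbb{E}\big[\mathcal{L}_1(\Pi_\alpha E)\big]=0$, hence $\mathcal{L}_1(\Pi_\alpha E)=0$ a.s. This is a legitimate and arguably cleaner route, and it buys a strictly stronger conclusion (a.s.\ Lebesgue-null projection, not merely empty interior); the only extra obligation is joint measurability of $(\omega,x)\mapsto\ind\{\ell_\alpha(x)\cap E_\omega\neq\emptyset\}$, which is routine since the event is the decreasing limit of the level-$n$ events. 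Second, the step you flagged as delicate --- whether Theorem \ref{0117} controls your count $N_n(x)$ of $\Lambda_n$-squares met by the line, rather than only the box count of the slice set --- is handled differently by the paper: there the counted family $\Lambda_{\alpha,x,n}$ is \emph{defined} by the requirement $\ell_\alpha(x)\cap K_{\mathfrak{i}}\cap\Lambda\neq\emptyset$, which is precisely what the box dimension of $\Lambda_{\alpha,x}$ controls (up to bounded multiplicative factors), and this finer count suffices because $E\subseteq\Lambda$ guarantees that any point of $E\cap\ell_\alpha(x)$ certifies a retained square in that finer family. You can either adopt that definition verbatim, or note that for this particular carpet your coarser reading is in fact correct: $\partial Q\subseteq\Lambda$ (boundary points of the unit square have triadic expansions avoiding the digit $(1,1)$), hence $\partial K_{\mathfrak{i}}=S_{\mathfrak{i}}(\partial Q)\subseteq\Lambda$ for every carpet cylinder, and a line meeting a closed square meets its boundary, so every $\Lambda_n$-square met by $\ell_\alpha(x)$ already meets $\ell_\alpha(x)\cap\Lambda$ and the two counts coincide. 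Either one-line repair closes the only gap you left open.
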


Clearly, Proposition \ref{0114} implies Theorem \ref{newResults} i.

\begin{proof}[Proof of Proposition \ref{0114}]
  We show that almost surely on a dense set of points $x$ we have $\ell_\alpha(x)\cap E=\emptyset$. By Theorem \ref{0117} for $\mathcal{L}_1$ almost all $x$ we have
	\begin{equation}\label{0120}
	\dim_{\rm B} \Lambda_{\alpha,x}=\frac{\log\big((8/3)(1-\varepsilon_\alpha)\big)}{\log 3}
	\end{equation}
	with an $\varepsilon_\alpha > 0$, so now on we suppose that $x$ has this property. Let
	\[
		\Lambda_{\alpha,x,n} = \Big\{ (\underline{i}_n,\underline{j}_n) \in \big\{\{0,1,2\}^2\setminus (1,1)\big\}^n\ :\ \ell_\alpha(x)\cap K_{\underline{i}_n,\underline{j}_n} \cap \Lambda \neq \emptyset \Big\},
	\]
	In addition, let $N_{\alpha,x,n} = \#\Lambda_{\alpha,x,n}$ and $\mathcal{E}_{\alpha,x,n} = \mathcal{E}_n \cap \Lambda_{\alpha,x,n}$.
	\begin{fact}
	$\mathbb{E}\big(\mathcal{E}_{\alpha,x,n}\big) \to 0$ as $n\to \infty$.
	\end{fact}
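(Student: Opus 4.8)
The plan is to compute the expectation exactly and then control it through the deterministic slice dimension supplied by Theorem \ref{0117}. Since every cube of the Sierpi\'nski carpet carries the same retention probability $p$, the indicator that a fixed level-$n$ carpet address $\mathfrak{i}=(\underline{i}_n,\underline{j}_n)$ survives the percolation is a product of $n$ independent Bernoulli$(p)$ variables, so $\mathbb{P}(\mathfrak{i}\in\mathcal{E}_n)=p^n$. Writing
\[
  \#\mathcal{E}_{\alpha,x,n}=\sum_{\mathfrak{i}\in\Lambda_{\alpha,x,n}}\ind\{\mathfrak{i}\in\mathcal{E}_n\}
\]
and taking expectations gives the clean identity $\mathbb{E}\big(\#\mathcal{E}_{\alpha,x,n}\big)=N_{\alpha,x,n}\,p^n$. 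Everything thus reduces to controlling the exponential growth rate of $N_{\alpha,x,n}$.

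Next I would identify $N_{\alpha,x,n}$ with a box-counting function of the slice $\Lambda_{\alpha,x}$. By its very definition, $\Lambda_{\alpha,x,n}$ consists exactly of those level-$n$ carpet cubes that meet $\ell_\alpha(x)$ in a point of $\Lambda$; restricting these cubes to the line produces segments of length at most $\sqrt2\,3^{-n}$ whose union covers $\Lambda_{\alpha,x}$, each segment containing at least one point of the slice. Since a box of side $3^{-n}$ meets at most a bounded number of level-$n$ cubes, $N_{\alpha,x,n}$ agrees, up to a multiplicative constant independent of $n$, with the number of intervals of length $3^{-n}$ needed to cover $\Lambda_{\alpha,x}$. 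Consequently
\[
  \limsup_{n\to\infty}\frac{\log N_{\alpha,x,n}}{n\log 3}\le \dim_{\rm B}\Lambda_{\alpha,x}=d_\alpha,
\]
the value $d_\alpha$ being the one produced by Theorem \ref{0117} and written out in \eqref{0120}. In particular, for every $\delta>0$ there is $n_0$ with $N_{\alpha,x,n}\le 3^{n(d_\alpha+\delta)}$ for all $n\ge n_0$.

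Finally I would close the argument by the right choice of threshold. Set $p_\alpha:=3^{-d_\alpha}=\dfrac{3}{8(1-\varepsilon_\alpha)}$, which is strictly larger than $3/8$ because $\varepsilon_\alpha>0$; this is precisely the constant asserted in Proposition \ref{0114}. For $p<p_\alpha$ we have $p\,3^{d_\alpha}<1$, so choosing $\delta>0$ small enough that $p\,3^{d_\alpha+\delta}<1$ yields, for $n\ge n_0$,
\[
  \mathbb{E}\big(\#\mathcal{E}_{\alpha,x,n}\big)=N_{\alpha,x,n}\,p^n\le \big(p\,3^{d_\alpha+\delta}\big)^n\longrightarrow 0,
\]
which is the claim.

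I expect the only genuinely delicate point to be the second step: justifying rigorously that counting level-$n$ approximation cubes that the line meets reproduces the box-counting asymptotics of the \emph{limiting} slice, i.e. that no overcounting from cubes hit by the line but containing no carpet point of the slice can inflate the growth rate. This is exactly why the defining condition of $\Lambda_{\alpha,x,n}$ insists on $\ell_\alpha(x)\cap K_{\underline{i}_n,\underline{j}_n}\cap\Lambda\neq\emptyset$; once this condition is taken into account the comparison with the box-counting number, and hence the invocation of Theorem \ref{0117}, is straightforward.
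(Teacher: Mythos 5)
Your proposal is correct and follows essentially the same route as the paper: the exact identity $\mathbb{E}\big(\#\mathcal{E}_{\alpha,x,n}\big)=N_{\alpha,x,n}\,p^n$, a subexponential-slack bound on $N_{\alpha,x,n}$ from the box dimension $d_\alpha$ of Theorem \ref{0117}, and a threshold $p_\alpha>3/8$ making the product tend to $0$. The only (immaterial) difference is bookkeeping: the paper absorbs the slack into a factor $(1+\varepsilon_\alpha)^n$ and takes $p_\alpha=\frac{3}{8}\frac{1}{1-\varepsilon_\alpha^2}$, whereas your arbitrary $\delta>0$ yields the slightly larger threshold $3^{-d_\alpha}=\frac{3}{8(1-\varepsilon_\alpha)}$, both of which suffice for Proposition \ref{0114}.
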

	\begin{proof}[Proof of the Fact]
	Using \eqref{0120}, there exists $n'=n'(\alpha,x)$ such that for $n>n'$ we have
	\[
	  \frac{\log N_{\alpha,x,n}}{\log 3^n} < \frac{\log\big((8/3)(1-\varepsilon_\alpha)(1+\varepsilon_\alpha)\big)}{\log 3},
	\]
	that is
	\[
	  N_{\alpha,x,n} < \left(\frac{8}{3}\right)^n\Big(1-\varepsilon_\alpha^2\Big)^n.
	\]
Using this and the definitions above, we obtain
	\[
	  \mathbb{E}\big(\#\mathcal{E}_{\alpha,x,n}\big) < N_{\alpha,x,n}p^n \to 0,
	\]
	if $p <p_\alpha:= \frac{3}{8}\frac{1}{1-\varepsilon_\alpha^2}$.
	\end{proof}
	
	To conclude the of the proposition we use Markov's inequality and write
	\[ \mathbb{P}\big(\#\mathcal{E}_{\alpha,x,n}>0\big)=\mathbb{P}\big(\#\mathcal{E}_{\alpha,x,n}\geq 1\big) \leq \mathbb{E}\big(\#\mathcal{E}_{\alpha,x,n}\big) \to 0.
	\]
	Hence
	\[
	\mathbb{P}\big(E\cap \ell _{\alpha}(x)\neq \emptyset\big) = 0.
	\]
	This holds for all $x$ satisfying \eqref{0120}, and also simultaneously for a dense countable set of such $x$.

\end{proof}

\begin{remark}
B{\'a}r{\'a}ny and Rams \cite[Theorem 1.2 and Proposition 2.1]{barany2014dimension}
 proved the analog of Theorem \ref{0117} for generalized Sierpi{\'n}ski-like carpets under a light non-resonance   assumption. Therefore the conclusion of Theorem \ref{0114} holds for all such carpets as well.
\end{remark}

\subsection{Conditions $A(\alpha)$ and $B(\alpha)$}

The part of the proof of the main result in \cite{Rams} which does not generalize to all inhomogeneous cases is the verification of a condition called Condition $B(\alpha)$. To define it, we introduce some notation. Recall that we fixed $d=2$. From this point on, we think of $\Pi_\alpha$ as the angle-$\alpha$ projection from $\mathbb{R}^2$ to the $x$-axis.
For $\mathfrak{i}\in \mathcal{I}^n$ let
\begin{equation*}
	\psi_{\alpha,\mathfrak{i}}:\mathbb{R}\supset \Pi_\alpha(K_\mathfrak{i}) \to \Pi_\alpha(Q)\subset\mathbb{R},\quad
\psi_{\alpha,\mathfrak{i}}:=\Pi_\alpha\circ S_\mathfrak{i}^{-1}\circ\Pi_\alpha^{-1},
\end{equation*}	
Note that this is well defined since $S_\mathfrak{i}$ (defined in Section \ref{a74}) does not contain rotation.
The support $\Pi_\alpha(K_{\mathfrak{i}})$ of $\psi_{\alpha,\mathfrak{i}}$ is illustrated on Figure \ref{fig1}\textbf{(a)}.

\begin{figure}
	\includegraphics[width=0.19\textwidth, valign=t]{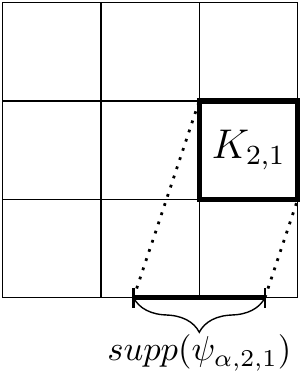}
	\hspace{1cm}
	\includegraphics[width=0.3\textwidth, valign=t]{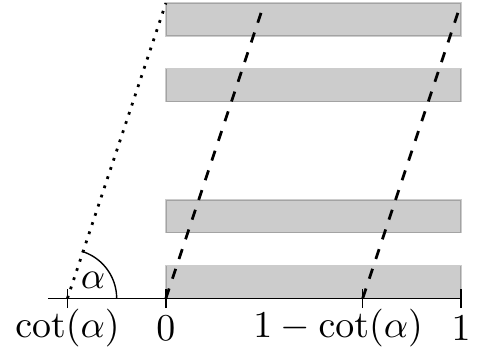}
	\hspace{1cm}
	\includegraphics[width=0.305\textwidth, valign=t]{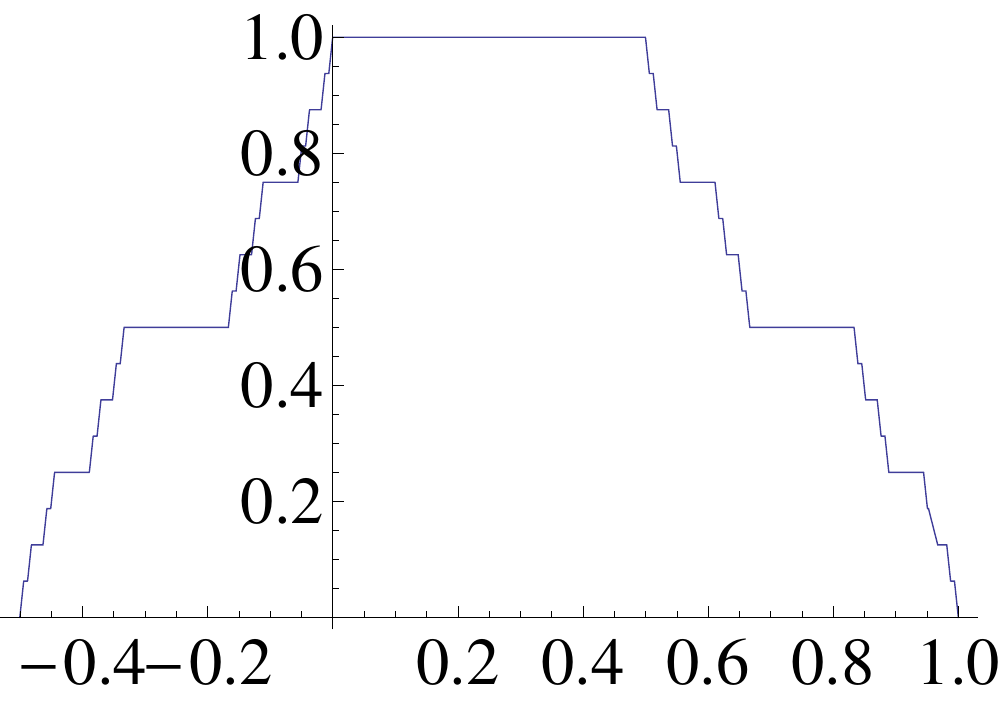}
	\caption{\textbf{(a)}: The support of $\psi_{\alpha,\mathfrak{i}}$. \textbf{(b)}: The approximation of the corresponding  deterministic measure  \textbf{(c)}: The function defined in \eqref{210} with $\cot(\alpha) = 0.5$.}\label{fig1}
\end{figure}

The following operator $F_\alpha$ \cite{Rams} is defined on non-negative real functions on $\Pi_\alpha(Q)$ (an interval), vanishing on the endpoints, by
\begin{equation}\label{0336}
  F_\alpha g(x)=\sum_{\mathbf{i} \in \mathcal{I}}{p_{\mathbf{i}}\cdot g\circ \psi_{\alpha,\mathbf{i}} (x)}.
\end{equation}
It is easy to see that the $n$-th iterate of $F_\alpha$ equals
\[
  F_\alpha^n g(x)=\sum_{\mathfrak{i}\in \mathcal{I}^n}{p_{\mathfrak{i}}\cdot g\circ \psi_{\alpha,\mathfrak{i}} (x)}.
\]
Note that by definition $\psi_{\alpha,\mathfrak{i}}$ is supported on $\Pi_\alpha(K_{\mathfrak{i}})$, therefore the sum is essentially taken over $\{\mathfrak{i}\in \mathcal{I}^n\ :\ x \in \Pi_\alpha(K_{\mathfrak{i}})\}$.

\begin{definition}[\cite{Rams}]\label{u99}
  We say that Condition $A(\alpha)$ holds if there exist $I_{1}^{\alpha},I_{2}^{\alpha} \subset \mathrm{int}(\Pi_\alpha(Q))$ closed intervals and a positive integer $r_\alpha$ such that
  \begin{description}
    \item[(a)] $I_{1}^{\alpha} \subset \mathrm{int}(I_{2}^{\alpha})$;
    \item[(b)] $F_{\alpha}^{r_\alpha}\ind_{I_{1}^{\alpha}} \geq \ind_{I_{2}^{\alpha}}$.
  \end{description}

\end{definition}
It is a simple observation that $A(\alpha)$ is an open condition (if $A(\alpha_0)$ holds then $A(\alpha)$ holds for all $\alpha$ sufficiently close to $\alpha_0$.) In some cases (for example when $E$ is homogeneous in probabilities) to check that Condition $A(\alpha)$ holds for an $\alpha$ it is more convenient to check the following stronger condition:

\begin{definition}[\cite{Rams}]\label{Balpha}
We say that Condition $B(\alpha)$ is satisfied if there exists a function $g_\alpha$ such that
\begin{description}
  \item[(a)] $g_\alpha$ is continuous,
  \item[(b)] $g_\alpha(x) > 0$ for $x \in {\rm int}(\Pi_\alpha(Q))$,
  \item[(c)] $g_\alpha(x) = 0$ for $x \notin {\rm int}(\Pi_\alpha(Q))$ and
  \item[(d)] there exists an $\varepsilon>0$ such that for every $x\in {\rm int}(\Pi_\alpha(Q))$ we have
\end{description}
\begin{equation*}
  F_\alpha g_\alpha(x) > (1+\varepsilon)g_\alpha(x)
\end{equation*}
for an $\varepsilon > 0$.
\end{definition}

\begin{lemma}[\cite{Rams}]
For all $\alpha$, Condition $B(\alpha)$ implies Condition $A(\alpha)$.
\end{lemma}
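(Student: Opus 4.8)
The plan is to convert the multiplicative expansion built into $B(\alpha)$ into the single renewal inequality \textbf{(b)} of $A(\alpha)$. Write $J:=\Pi_\alpha(Q)$. First I would iterate the defining estimate of $B(\alpha)$: because $F_\alpha$ is linear and monotone (its coefficients $p_{\mathbf i}$ are nonnegative), the bound $F_\alpha g_\alpha\ge(1+\varepsilon)g_\alpha$ on $\mathrm{int}(J)$ improves to $F_\alpha^n g_\alpha\ge(1+\varepsilon)^n g_\alpha$ for every $n$. I would also record that $g_\alpha$ is bounded, $0\le g_\alpha\le C$, and that, since $g_\alpha$ vanishes at the endpoints of $J$, each rescaled summand $g_\alpha\circ\psi_{\alpha,\mathfrak i}$ extends continuously by zero, so that every iterate $F_\alpha^n g_\alpha$ is continuous.

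The difficulty is that $A(\alpha)$ demands a lower bound on $F_\alpha^{r}\ind_{I_1}$, while the only function whose iterates I control from below is $g_\alpha$ — and one cannot bound $g_\alpha$ below by a multiple of $\ind_{I_1}$, since their supports differ. Hence a \emph{spreading} step is unavoidable. Exploiting that each $\psi_{\alpha,\mathbf i}$ maps its domain $\Pi_\alpha(K_{\mathbf i})$ onto all of $J$ and that these domains cover $J$, I would argue that $F_\alpha$ is positivity improving: for closed intervals $I_1\subset\mathrm{int}(I_2)$ with $I_2\Subset I_3\Subset\mathrm{int}(J)$, there exist $n_0$ and $\delta>0$ with $F_\alpha^{n_0}\ind_{I_1}\ge\delta$ on $I_3$.

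The delicate point is the boundary of $J$: near its endpoints $F_\alpha^{n_0}\ind_{I_1}$ must vanish on a small gap, so a global bound $F_\alpha^{n_0}\ind_{I_1}\ge c\,g_\alpha$ is hopeless, and a hard cut-off of $g_\alpha$ ruins its expansion. I would circumvent this using the buffer $I_3$: replace $g_\alpha$ by the principal eigenfunction $\phi_3$ of the positive operator $h\mapsto(F_\alpha h)|_{I_3}$ acting on functions supported in $I_3$. By a Krein--Rutman/Perron--Frobenius argument $\phi_3>0$ on $\mathrm{int}(I_3)$, $\phi_3=0$ on $\partial I_3$, with eigenvalue $\lambda_3$; and since these restricted eigenvalues rise to the exponential growth rate of $F_\alpha^n\ind_J$ (which is $\ge 1+\varepsilon$, as $\ind_J\ge C^{-1}g_\alpha$) when $I_3\uparrow J$, I can fix $I_3$ so close to $J$ that $\lambda_3>1$. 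Crucially $\partial I_3$ lies in $\mathrm{int}(J)$, where spreading leaves no gap, so the uniform bound on $I_3$ upgrades to $F_\alpha^{n_0}\ind_{I_1}\ge c_0\phi_3$ \emph{everywhere} (with $c_0=\delta/\sup\phi_3$, using $\phi_3\equiv0$ off $I_3$). Now the expansion iterates cleanly: if $h\ge c_0\phi_3$ then monotonicity together with $(F_\alpha\phi_3)|_{I_3}=\lambda_3\phi_3$ gives $F_\alpha h\ge c_0\lambda_3\phi_3$ everywhere, whence $F_\alpha^{n_0+n}\ind_{I_1}\ge c_0\lambda_3^{\,n}\phi_3$ on $I_3$. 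As $\phi_3\ge m_\phi>0$ on $I_2$ and $\lambda_3>1$, choosing $n$ with $c_0\lambda_3^{\,n}m_\phi\ge1$ yields $F_\alpha^{r}\ind_{I_1}\ge\ind_{I_2}$ with $r=n_0+n$, which is exactly $A(\alpha)$.

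I expect the main obstacle to be precisely the two analytic inputs of the buffer step: the positivity-improving (spreading) property and the fact that the restricted eigenvalue $\lambda_3$ exceeds $1$ for $I_3$ near $J$. These must be checked for the given direction $\alpha$, and with care when some $p_{\mathbf i}$ vanish (as in the Sierpi\'nski example of Section \ref{noInt}); the hypothesis $B(\alpha)$, which already supplies a strictly positive eigenfunction $g_\alpha$ with expansion factor exceeding $1$, is exactly what should force the surviving branches to spread enough for these to hold.
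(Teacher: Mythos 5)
You should first note that the survey itself contains no proof of this lemma: it is quoted from \cite{Rams} and stated without argument, so your attempt can only be judged on its own merits. Judged that way, your skeleton (iterate $F_\alpha^n g_\alpha\geq(1+\varepsilon)^n g_\alpha$, spread $\ind_{I_1}$ to a buffer interval, then bootstrap along a positive eigenfunction) is formally coherent, but both of the ``analytic inputs'' you defer are genuine gaps, and the first is false in the generality you state it. Your spreading claim --- for \emph{all} closed $I_1\subset\mathrm{int}(I_2)$ with $I_2\subset I_3\subset\mathrm{int}(J)$ there exist $n_0$, $\delta>0$ with $F_\alpha^{n_0}\ind_{I_1}\geq\delta$ on $I_3$ --- fails even for homogeneous percolation in rational directions. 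Indeed, as the last subsection of this paper makes explicit, the branch images $\psi_{\alpha,\mathfrak i}(x)$ of a fixed point $x$ are not spread over $J$: for southern entries they are exactly the rotation orbit $x+k\beta \pmod 1$ with $\beta=\cot\alpha$, so for rational $\beta$ they occupy a finite set, and a small $I_1$ avoiding that set gives $F_\alpha^{n}\ind_{I_1}(x)=0$ for \emph{every} $n$. Your justification (``each $\psi_{\alpha,\mathbf i}$ maps its domain onto $J$ and the domains cover $J$'') controls only the supports $\Pi_\alpha(K_{\mathfrak i})$, not where the images of a single point land; and you never actually derive any spreading from the hypothesis $B(\alpha)$, even though that derivation is the heart of the implication. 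The underlying obstruction is quantitative: $B(\alpha)$ bounds weighted sums of $g_\alpha$-values from below, while the total weight of level-$n$ branches through $x$, $\max_x F_\alpha^n\ind_J(x)$, in general grows strictly faster than $(1+\varepsilon)^n$, so the natural truncation estimate $F_\alpha^n\ind_{I_1}\geq C^{-1}\bigl((1+\varepsilon)^n g_\alpha-\sup_{J\setminus I_1}g_\alpha\cdot F_\alpha^n\ind_J\bigr)$ produces a lower bound only on a region \emph{smaller} than $I_1$, whereas Condition $A(\alpha)$ requires domination of $\ind_{I_2}$ on a region strictly \emph{larger} than $I_1$.

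The second input is equally unsupported. Krein--Rutman requires compactness (or at least quasi-compactness with a spectral gap) plus irreducibility, and the operator $h\mapsto(F_\alpha h)|_{I_3}$ is a finite sum of weighted composition operators with expanding piecewise-affine maps; such operators are not compact on $C(I_3)$, and you offer no substitute framework, nor any proof of irreducibility --- which, by the rational-direction phenomenon above and the possible vanishing of some $p_{\mathbf i}$, cannot be taken for granted. Likewise the assertion that the restricted eigenvalues $\lambda_3$ increase to the growth rate of $F_\alpha^n\ind_J$ (hence exceed $1$) as $I_3\uparrow J$ is exactly where the difficulty lives: restricting to $I_3$ imposes a hard cutoff at $\partial I_3$ at \emph{every} iterate, discarding branches that stray toward $\partial J$ and re-enter, and controlling that discarded weight is the same unresolved boundary problem. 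So the proposal is not a proof but a program whose two central lemmas are unproved, one of them false as stated; a correct argument must use $B(\alpha)$ itself (continuity, strict expansion, and positivity of $g_\alpha$ on $\mathrm{int}(J)$) to manufacture the intervals $I_1\subset\mathrm{int}(I_2)$ --- e.g.\ as suitable level sets of $g_\alpha$ --- rather than invoking generic positivity-improvement and spectral theory that the operator $F_\alpha$ does not obviously possess.
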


Importantly, if $\int_{x \in \Pi_\alpha(Q)}{ g_\alpha(x) dx} = 1$, then $\int_{x \in \Pi_\alpha(Q)}{ F_\alpha g_\alpha(x) dx} = \sum_{\mathbf{i} \in \mathcal{I}}{p_{\mathbf{i}}} / M$, which is greater than $1$ if and only if the expected dimension of $E$ is greater than $1$. A natural candidate function for $g_\alpha$ in condition $B(\alpha)$ is the eigenfunction $f_\alpha$ of $F_\alpha$ corresponding to the eigenvalue $\sum_{\mathbf{i} \in \mathcal{I}}{p_{\mathbf{i}}} / M$:
\begin{equation}\label{a99}
  F_\alpha f_\alpha(x)=  \sum_{\mathbf{i} \in \mathcal{I}}{p_{\mathbf{i}}\cdot f_\alpha\circ \psi_{\alpha,\mathbf{i}} (x)}=\frac{\sum_{\mathbf{i} \in \mathcal{I}}p_{\mathbf{i}}}{M} \cdot  f_\alpha (x)
\end{equation}
 if such a function $f_\alpha$ exists and if it satisfies conditions (a)-(c) of Definition \ref{Balpha}.
We can characterize this function  $f_\alpha$ also in the following way:
Consider the  homogeneous self-similar IFS  on $\Pi_\alpha(Q)$ with common contraction ratio  $1/M$:
\begin{equation}\label{a98}
  \Psi_\alpha:=\left\{ \psi_{\alpha,\mathbf{i}}^{-1} (x)\right\}_{\mathbf{i} \in \mathcal{I}} \mbox{ with weights } q_{\mathbf{i}} = p_{\mathbf{i}} / \sum_{\mathbf{i} \in \mathcal{I}}{p_{\mathbf{i}}}.
\end{equation}

It follows from the change of variables formula
and \eqref{a99} that if there exists an $L^1$  function $f_\alpha $  with $f_\alpha \geq 0$ and $\int_{\Pi_\alpha(Q)}f_\alpha(x)dx=1$ which satisfies
\eqref{a99} then  this function $f_\alpha$ is the density function
of the IFS $\Psi_\alpha$ with weights $\left\{q_{\mathbf{i}}\right\}_{\mathbf{i} \in \mathcal{I}}$. Consequently, $f_\alpha$ is the density function
of the measure
$\nu_\alpha = \Pi_\alpha * \nu$.
This implies that
\begin{lemma}\label{a97}
  If the deterministic self-similar measure corresponding to the IFS $\Psi_\alpha$ and weights $\left\{q_{\mathbf{i}}\right\}_{\mathbf{i}\in\mathcal{I}}$ (defined in \eqref{a99}) is absolute continuous with a continuous  density function which is positive at $x$ if and only if $x\in\mathrm{int}(\Pi_\alpha(Q))$ then
  Condition $B(\alpha)$ holds.
\end{lemma}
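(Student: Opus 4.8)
The plan is to take the density function $f_\alpha$ itself as the test function $g_\alpha$ required by Condition $B(\alpha)$, and to verify the four requirements (a)--(d) of Definition \ref{Balpha} in turn. By hypothesis $\nu_\alpha$ is absolutely continuous with a continuous density $f_\alpha\ge 0$ which is strictly positive exactly on $\mathrm{int}(\Pi_\alpha(Q))$; since $f_\alpha\ge 0$, the ``if and only if'' also forces $f_\alpha(x)=0$ for $x\notin \mathrm{int}(\Pi_\alpha(Q))$. Setting $g_\alpha:=f_\alpha$, requirement (a) is the assumed continuity, (b) is the assumed strict positivity on the interior, and (c) is this vanishing off the interior. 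Thus the only substantive point is (d).

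For (d) I would first record that $f_\alpha$ is an eigenfunction of $F_\alpha$. This is the reverse of the correspondence already noted before \eqref{a99}: writing the self-similarity relation $\nu_\alpha=\sum_{\mathbf{i}\in\mathcal{I}} q_{\mathbf{i}}\,(\psi_{\alpha,\mathbf{i}}^{-1})_*\nu_\alpha$ and passing to densities by the change of variables formula, each map $\psi_{\alpha,\mathbf{i}}^{-1}$ contracts by the factor $1/M$, so the pushforward $(\psi_{\alpha,\mathbf{i}}^{-1})_*\nu_\alpha$ has density $M\,(f_\alpha\circ\psi_{\alpha,\mathbf{i}})$ on $\Pi_\alpha(K_{\mathbf{i}})$. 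Substituting $q_{\mathbf{i}}=p_{\mathbf{i}}/\sum_{\mathbf{j}\in\mathcal{I}}p_{\mathbf{j}}$ yields
\[
  f_\alpha(x)=\frac{M}{\sum_{\mathbf{j}\in\mathcal{I}}p_{\mathbf{j}}}\sum_{\mathbf{i}\in\mathcal{I}}p_{\mathbf{i}}\,f_\alpha\circ\psi_{\alpha,\mathbf{i}}(x)
  =\frac{M}{\sum_{\mathbf{j}\in\mathcal{I}}p_{\mathbf{j}}}\,F_\alpha f_\alpha(x),
\]
which is exactly \eqref{a99}, i.e.\ $F_\alpha f_\alpha=\lambda f_\alpha$ with $\lambda:=\sum_{\mathbf{i}\in\mathcal{I}}p_{\mathbf{i}}/M$.

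It remains to turn this eigenvalue equality into the strict inequality demanded by (d). Under the standing assumption that the expected dimension of $E$ exceeds $1$ we have $\sum_{\mathbf{i}\in\mathcal{I}}p_{\mathbf{i}}>M$ by \eqref{a93}, hence $\lambda>1$. I would then choose any $\varepsilon\in(0,\lambda-1)$, so that $1+\varepsilon<\lambda$. For every $x\in\mathrm{int}(\Pi_\alpha(Q))$ the strict positivity $f_\alpha(x)>0$ gives
\[
  F_\alpha f_\alpha(x)=\lambda f_\alpha(x)>(1+\varepsilon)f_\alpha(x),
\]
which is precisely requirement (d). This completes the verification and shows that Condition $B(\alpha)$ holds.

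The step I expect to be delicate is (d), and specifically the need for \emph{strict} positivity of $f_\alpha$ at every point of $\mathrm{int}(\Pi_\alpha(Q))$: the transfer-operator identity only produces the equality $F_\alpha f_\alpha=\lambda f_\alpha$, and it is the pointwise positivity hypothesis --- holding at each interior point, not merely almost everywhere --- that upgrades this to the uniform strict inequality $F_\alpha f_\alpha>(1+\varepsilon)f_\alpha$. Were $f_\alpha$ allowed to vanish somewhere inside the interval, at such a point both sides would equal $0$ and (d) would fail; this is exactly why the hypothesis insists on a continuous density that is positive precisely on $\mathrm{int}(\Pi_\alpha(Q))$.
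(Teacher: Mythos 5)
Your proof is correct and follows essentially the same route as the paper, which likewise takes $g_\alpha := f_\alpha$, identifies $f_\alpha$ via the change of variables formula with the density of the projected self-similar measure $\nu_\alpha = \Pi_\alpha * \nu$ so that the eigen-equation \eqref{a99} holds, and obtains requirement (d) of Definition \ref{Balpha} from the eigenvalue $\sum_{\mathbf{i} \in \mathcal{I}} p_{\mathbf{i}}/M > 1$. Your closing remark makes explicit two points the paper leaves implicit in its one-line deduction: that the strict pointwise (not merely almost-everywhere) positivity of $f_\alpha$ on $\mathrm{int}(\Pi_\alpha(Q))$ is what turns the eigen-equation into the strict inequality of (d), and that the standing assumption \eqref{a93} of expected dimension greater than $1$ is genuinely needed, since with $\sum_{\mathbf{i} \in \mathcal{I}} p_{\mathbf{i}} \leq M$ the eigenvalue is at most $1$ and Condition $B(\alpha)$ would fail even when the density hypothesis is satisfied.
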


 In the following we consider an example which satisfies the assumptions of Lemma \ref{a97}
 with a density which is an affine transform a the Devil' staircase function.

\subsection{A Cantor-like random carpet}
Now we work on the plane and
we present an example for which  Condition $B(\alpha)$ can be satisfied for all $\alpha\in\left[\frac{\pi}{4},\frac{3\pi}{4}\right]$. Hence, for almost all realizations, there is interval in $\Pi_\alpha(E)$, simultaneously  for all $\alpha\in\left[\frac{\pi}{4},\frac{3\pi}{4}\right]$, but
on the other hand, there is no interval in $\Pi_0(E)$.
\begin{example}\label{a73}
 Consider the random Sierpi\'nski-like carpet $E$ defined by $d=2$, $M=3$ and
\begin{equation*}
\begin{cases}
p_{1,j} = 0\text{ for all $j$,}\\
p_{i,j} = p > 1/2\ \text{ for all $j$ and all $i \neq 1$.}
\end{cases}
\end{equation*}
\end{example}
Clearly, $E$ is not simply the product of a random Cantor set and the unit interval, as the retention of squares of the same row is independent. However, as we noted earlier, the eigenfunction $f_\alpha$ of $F_\alpha$ can be characterized as the probability density function of the corresponding projected deterministic self-similar measure, that is the projection of the natural measure on the product of the unit interval and triadic Cantor set.

To give a formula for $f_\alpha$ let us denote the usual Cantor function (or Devil' staircase function)
by
\[
  \mathcal{C}(x) = \mu([0,x)),
\]
where $\mu$ is the natural measure of the deterministic triadic Cantor set on $[0,1]$. It is well known that $\mathcal{C}(x)$ is H{\"o}lder-continuous and grows monotonically from $0$ to $1$. Let us identify $\Pi_\alpha(K)$ with $[-\cot(\alpha), 1]$ for $\alpha \in (\pi/4, \pi/2)$ (on the $x$-axis).

We show that for $\alpha \in (\pi/4, \pi/2)$ the eigenfunction $f_\alpha$ of $F_\alpha$ is supported on the whole interval $(-\cot(\alpha), 1)$ and is given by (up to a normalizing constant)

\begin{equation}\label{210}
  f_\alpha(x) =
	\begin{cases}
			\mathcal{C}\big((x + \cot(\alpha))/\cot(\alpha)\big),\text{ if } x \in [-\cot(\alpha), 0),\\
			1,\text{ if } x \in [0, 1-\cot(\alpha)),\\
			\mathcal{C}\big((- x + 1)/\cot(\alpha)\big),\text{ if } x \in [1-\cot(\alpha), 1],
	\end{cases}
\end{equation}
for an illustration see Figure \ref{fig1}\textbf{(c)}. As for a proof it, we refer to Figure \ref{fig1}\textbf{(b)}: since the slices between the two dashed lines on Figure \ref{fig1}\textbf{(b)} are identical, hence $f_\alpha(x)$ is constant on $[0, 1-\cot(\alpha)]$, and that constant is finite and positive because $f_\alpha$ is a probability density, see Figure \ref{fig1}\textbf{(c)}. On the other hand, to see what happens for $x\in[-\cot(\alpha), 0]$ let us approximate the deterministic $2$-dimensional measure $\nu$ the usual way, i.e. with the normalized Lebesgue measure on the level-$n$ cylinder set (gray stripes on Figure \ref{fig1}\textbf{(b)}). Then clearly the angle $\alpha$ projection of its density $f_{\alpha,n}(x)$ on $[-\cot(\alpha), 0]$ is the $n$-th approximation of the Cantor function rescaled to that interval. Thus, in the limit $n\to \infty$ we receive \eqref{210}. The interval $[1-\cot(\alpha), 1]$ can be handled by symmetry.

Since $f_\alpha(x)$ is positive and continuous on $(-\cot(\alpha), 1)$ we can apply Condition $B$ (\cite[Theorem 3 and Proposition 9]{Rams}) to obtain that almost surely, if $E\neq \emptyset$, $\Pi_\alpha E$ contains an interval for all $\alpha \in (\pi/4, \pi/2)$, and by symmetry a.s. for all $\alpha \in (\pi/4, \pi/2)\cup (\pi/2, 3\pi/4)$. Moreover, according to \cite[Theorem 1]{Falconer1992}, $\Pi_{\pi/2}(E)$ too contains an interval if $E\neq \emptyset$, almost surely.

On the other hand, $\Pi_{0}(E)$ is contained in the triadic Cantor set and in particular contains no interval.

\subsection{Checking Condition B}

It turns out that checking Condition B is much harder in general, and it is not even true for all directions, as we saw in the previous example and in Section \ref{noInt}. Consider the random Sierpi\'nski carpet of Section \ref{noInt}. It is known, that the projection of the corresponding deterministic self-similar measure is singular on a residual set of directions \cite{SimonVago}, so in particular the exceptional set has Packing dimension $1$. On the other hand, we also know that $f_\alpha$ is in $L^p$ for some $p>1$ outside of an exceptional set of Hausdorff-dimension $0$ \cite{shmerkin2014absolute}. Motivated by these, Marstrand's theorem (\cite{Marstrand1954}) and the above examples, we make the following conjecture:
\begin{conjecture}\label{0333}
Operator $F_\alpha$, defined in \eqref{0336}, has an eigenfunction $f_\alpha$ with eigenvalue $\sum_{\mathbf{i} \in \mathcal{I}}{p_{\mathbf{i}}} / M$, which is continuous for all $\alpha\in [0, \pi]$, except for a set of directions of $0$ Hausdorff dimension.
\end{conjecture}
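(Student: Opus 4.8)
The plan is to recast the conjecture as a regularity statement about the projected deterministic self-similar measure and then to control the set of bad directions by combining the $L^{q}$-dimension machinery for self-similar measures with a smoothing argument that upgrades integrability to continuity. By Lemma \ref{a97} and the discussion around \eqref{a99}--\eqref{a98}, the sought eigenfunction $f_\alpha$ is, up to normalisation, exactly the density of $\nu_\alpha=\Pi_\alpha*\nu$, where $\nu$ is the self-similar measure of the homogeneous IFS $\Psi_\alpha$ with weights $q_{\mathbf i}=p_{\mathbf i}/\sum_{\mathbf i}p_{\mathbf i}$. Thus the conjecture is equivalent to: for all $\alpha\in[0,\pi]$ outside a set of Hausdorff dimension $0$, the measure $\nu_\alpha$ is absolutely continuous with a continuous density. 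Since the maps $\psi_{\alpha,\mathbf i}^{-1}$ all contract by $1/M$, $\nu_\alpha$ is a homogeneous self-similar measure on the line, and its Fourier transform factorises as the infinite Riesz-type product
\begin{equation*}
  \widehat{\nu_\alpha}(\xi)=\prod_{n=1}^{\infty} P_\alpha\!\left(\xi/M^{n}\right),\qquad
  P_\alpha(\xi)=\sum_{\mathbf i\in\mathcal I}q_{\mathbf i}\,e^{2\pi i\,t_{\mathbf i}(\alpha)\,\xi},
\end{equation*}
where $t_{\mathbf i}(\alpha)$ is the translation part of $\psi_{\alpha,\mathbf i}^{-1}$. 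This explicit form is the main leverage.

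First I would record the input already available in the literature: by Shmerkin's exceptional-set estimates for self-similar measures \cite{shmerkin2014absolute} (and the SI-martingale results of \cite{shmerkin_si}), there is a set $\mathcal{E}_0\subset[0,\pi]$ of Hausdorff dimension $0$ such that for $\alpha\notin\mathcal{E}_0$ the density of $\nu_\alpha$ lies in $L^{p}$ for some $p>1$; in fact the same circle of ideas (Hochman's inverse theorem for the entropy of convolutions together with the homogeneity of $\Psi_\alpha$) should yield, outside a dimension-$0$ set, that every $L^{q}$ dimension of $\nu_\alpha$ equals $1$, hence that the density lies in $L^{p}$ for all finite $p$. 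The task is then to promote this to continuity while keeping the exceptional set zero-dimensional.

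The upgrade I would attempt is a self-similar smoothing/bootstrap in the spirit of Peres--Rams \cite{Peres2016} and Shmerkin--Suomala \cite{shmerkin_si}, who already obtain H\"older densities in the homogeneous regime (Theorem \ref{thm:Peres}). Concretely, I would try to factor $\nu_\alpha=\nu_\alpha^{(1)}\ast\nu_\alpha^{(2)}$ into two convolution components built from disjoint blocks of generation indices, arranged so that each factor is a self-similar measure to which the $L^{p}$-for-all-$p$ conclusion applies with conjugate exponents, whence $\nu_\alpha^{(1)}\ast\nu_\alpha^{(2)}\in C^{0}$ by the standard fact that $L^{p}\ast L^{p'}\subset C_0$. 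Alternatively, I would estimate $\widehat{\nu_\alpha}$ directly, aiming for power-law decay $|\widehat{\nu_\alpha}(\xi)|\lesssim|\xi|^{-\sigma}$ with $\sigma>1$, which gives $\nu_\alpha\in H^{s}$ for $1/2<s<\sigma-1/2$ and hence a continuous density by Sobolev embedding $H^{s}(\mathbb R)\hookrightarrow C^{0}(\mathbb R)$. The decay rate of the Riesz product is governed by how frequently $|P_\alpha(\xi/M^{n})|$ stays away from $1$; an Erd\H{o}s--Kahane type large-deviation argument, run uniformly in $\alpha$, should give the required decay for all $\alpha$ outside a further dimension-$0$ set of resonant directions, which one then absorbs into $\mathcal{E}_0$. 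Finiteness and positivity of the density on $\mathrm{int}(\Pi_\alpha(Q))$ would follow from the self-similar functional equation \eqref{a99} exactly as in the explicit computation \eqref{210} of the previous subsection.

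I expect the genuine obstacle to be the simultaneous control of regularity and of the exceptional set. The entropy and $L^{q}$-dimension methods comfortably deliver absolute continuity and $L^{p}$ bounds off a zero-dimensional set, but continuity is a strictly higher-order regularity statement that these methods do not see directly; conversely, the H\"older-density arguments of \cite{Peres2016, shmerkin_si} are tuned to the homogeneous (or Lebesgue-generic) situation, and it is unclear that they survive intact once the weights $q_{\mathbf i}$ vary with $\mathbf i$ and one insists on a dimension-$0$, rather than merely Lebesgue-null, exceptional set. Most delicate is the treatment of the resonant directions: Section \ref{noInt} shows that for $\tan\alpha\in\mathbb Q$ the overlaps in $\Psi_\alpha$ can be exact and the density genuinely singular, so these directions must lie in the exceptional set; proving that the totality of such arithmetic resonances, together with any accumulation of near-resonances that degrades the Fourier decay below the continuity threshold, forms a set of Hausdorff dimension $0$ is, I believe, the crux of the conjecture.
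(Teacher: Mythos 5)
First, a point of status: the statement you are proving is Conjecture \ref{0333} — the paper offers no proof of it. What the paper provides after the conjecture is an explicitly ``by far incomplete'' strategy, and it is a genuinely different one from yours: approximate $f_\alpha$ by the iterates $f_{\alpha,n}=\bigl(M/\sum_{\mathbf i}p_{\mathbf i}\bigr)^n F_\alpha^n f_{\alpha,0}$, show $\|f_{\alpha,n+1}-f_{\alpha,n}\|_\infty\to 0$ exponentially, and reduce this, via \eqref{s02}, to bounding partial ergodic sums $\sum_{k\in K}h(T_\beta^k x)$ of a mean-zero function along a \emph{sparse} subset of an orbit of the rotation $T_\beta$, $\beta=\cot\alpha$ — sparse precisely because the slices $\ell_\alpha(x)\cap E$ have dimension $<1$. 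The paper stops there, acknowledging that estimating such sparse ergodic sums is the open difficulty. So your proposal cannot be judged against a proof in the paper; it can only be judged as a program, and as a program it has concrete gaps beyond the ones you flag.

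The sharpest gaps are these. (i) Your convolution route needs conjugate exponents: $L^p\ast L^{p'}\subset C_0$ with $1/p+1/p'=1$, so if \cite{shmerkin2014absolute} gives each factor only ``$L^p$ for \emph{some} $p>1$'' (which is what is actually cited in the paper), the pair $(p,p')$ is useless, since $p'\to\infty$ as $p\downarrow 1$. Your upgrade to ``$L^p$ for all finite $p$'' outside a dimension-$0$ set is asserted (``should yield''), not available off the shelf: $L^q$-dimension equal to $1$ is a boundary statement and does not imply the density lies in $L^q$, and the inverse-theorem machinery is tuned to exactly this kind of endpoint loss. Moreover the even/odd-generation factors $\nu_\alpha^{(1)},\nu_\alpha^{(2)}$ are $M^{-2}$-homogeneous self-similar measures whose translation parts vary with $\alpha$ in a correlated way, so the family-exceptional-set theorems must be re-verified for each factor family; this is not automatic. (ii) The Fourier branch is quantitatively out of reach: you need pointwise decay $|\widehat{\nu_\alpha}(\xi)|\lesssim|\xi|^{-\sigma}$ with $\sigma>1$, while Erd\H{o}s--Kahane-type arguments produce small positive exponents. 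Worse, the method is provably lossy against the paper's own Example \ref{a73}: there $f_\alpha$ is continuous for \emph{every} $\alpha\in(\pi/4,\pi/2)$ by the explicit formula \eqref{210}, including rational $\cot\alpha$, yet for such $\alpha$ the Cantor convolution factor of $\widehat{\nu_\alpha}$ does not vanish along the subsequence $\xi_n\asymp 3^n/\cot\alpha$ and the decay is no better than $|\xi|^{-1}$, so Sobolev embedding fails exactly where continuity holds. (iii) The smoothing results of \cite{Peres2016,shmerkin_si} are SI-martingale arguments about the \emph{random} natural measure; the eigenfunction here is the density of the \emph{deterministic} measure $\nu_\alpha$, and those proofs use spatial independence of the randomness in an essential way, so they do not transfer. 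Finally, note the constraint recorded in the paper via \cite{SimonVago}: for the carpet of Section \ref{noInt} the projected measure is singular on a \emph{residual} set of directions, so the conjectural good set is topologically meager — any successful argument must be sharp direction-by-direction with purely dimension-theoretic control of exceptions, which rules out all open-condition or perturbative bootstraps. Your diagnosis that the arithmetic resonances are the crux is consistent with the paper's view, but the conjecture remains open under both your route and the paper's.
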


Verification or disproof of this conjecture would not only provide an advance in the understanding of fractal percolation, but is also of independent interest.

Now we propose a possible (but by far incomplete) way of proving this conjecture. For simplicity assume we have a random Sierpi\'nski-like carpet, i.e., for all $\mathbf{i} \in \mathcal{I}$ we have either $p_{\mathbf{i}} = 0$ or $p_{\mathbf{i}} = p$ with some $p\in (0,1]$, and we write
$$
\Sigma_n = \mathcal{I}^n \text{ and } \Sigma = \mathcal{I}^\mathbb{N},
$$
and the words corresponding to non-zero probabilities are denoted by
$$
\Omega_n = (\mathcal{I_+})^n \text{ and } \Omega = (\mathcal{I_+})^\mathbb{N},
$$
where $\mathcal{I} = \{ 0, \dots, M-1\}^d$, $d=2$, and $\mathcal{I_+} = \{ \mathbf{i} \in \mathcal{I}\ : p_{\mathbf{i}} \neq 0\ \}$. For symmetry reasons let us consider only directions $\alpha\in [\pi/4, \pi/2]$ of the projection, and let $\beta = \cot(\alpha)$. In fact it is possible to show that the general case (when not all $p_{\mathbf{i}}\in\left\{0,p\right\}$) is equivalent, by changing to a sufficiently high iterate and throwing away some of the retained cubes).

 For $x\in [-\beta,1]$ the level-$n$ cylinders of the slice through $x$ are denoted by
\begin{equation}\label{0321}
	\Omega_{\alpha,n,x}=\big\{\mathfrak{i}\in \Sigma_n \mid \ell_\alpha(x)\cap K_\mathfrak{i}\neq \emptyset \big\},
\end{equation}
where, as above, $\ell_\alpha(x)$ is the line going through $x$ in direction $\alpha$.

To show the continuity in Conjecture \ref{0333}, one could approximate $f_\alpha$ to level $n$ by $f_{\alpha,n}$ in the following way:
\begin{itemize}
	\item Let $f_{\alpha, 0}(x)$ be continuous, $f_{\alpha,0}(-\beta)=f_{\alpha,0}(1)=0$ and $f_{\alpha,0}(x)>0$ for $x\in (-\beta,1)$;
	\item For $n\geq 1$ let
	\[
	f_{\alpha,n}(x)=\frac{M}{\sum_{\mathbf{i}\in \mathcal{I}}p_\mathbf{i}}F_\alpha f_{\alpha,n-1}(x).
	\]
\end{itemize}
Then to get the continuity of $f_\alpha$ it is enough to show that $\left\|f_{\alpha,n+1}(x)-f_{\alpha,n}(x)\right\|_\infty \to 0$ exponentially fast in $n$ (which implies H{\"o}lder continuity). With $h(x)=f_{\alpha,1}(x)-f_{\alpha,0}(x)$ we can write
\begin{equation} \label{s02}
  \big| f_{\alpha,n+1}(x)-f_{\alpha,n}(x)\big| = |F^n h(x)|=\frac{M^{n}}{\#\Sigma_n}\bigg| \sum_{\mathfrak{i}\in \Sigma_{\alpha,n,x}}h\big(\psi_{\mathfrak{i}}(x)\big)\bigg|
	=\frac{M^{n}N_{\alpha,n,x}}{\#\Sigma_n}\bigg| \frac{1}{N_{\alpha,n,x}}\sum_{\mathfrak{i}\in \Sigma_{\alpha,n,x}}h\big(\psi_{\mathfrak{i}}(x)\big)\bigg|,
\end{equation}
where $\Sigma_{\alpha,n,x}$ was defined in \eqref{0321} and $N_{\alpha,n,x}=\#\Sigma_{\alpha,n,x}$.

To estimate the sum on the right hand side of \eqref{s02} note that $\int_{-\beta}^1 h(x)dx=0$. One could look at
\[
\frac{1}{N_{\alpha,n,x}}\sum_{\mathfrak{i}\in \Sigma_{\alpha,n,x}}h\big(\psi_{\mathfrak{i}}(x)\big)
\]
as a partial ergodic sum. Let us split $\Sigma_{\alpha,n,x}$ to two parts, $\Sigma_{\alpha,n,x}^S$ and $\Sigma_{\alpha,n,x}^W$, according to whether $\ell_\alpha(x)$ hits the southern or the western border of corresponding squares $K_{\mathfrak{i}}$, or equivalently, whether $\psi_{\alpha,\mathfrak{i}}(x) \in [0,1]$ or $\in [-\beta, 0)$. Here we consider only summation over $\Sigma_{\alpha,n,x}^S$, as the terms in $\Sigma_{\alpha,n,x}^W$ can be handled similarly. Notice that $\psi_{\mathfrak{i}}(x)$ is the relative entrance point of $\ell_\alpha(x)$ to the southern border of $K_{\mathfrak{i}}$. Also note that in each $1/M^n$ wide row of $Q$ there is at most one $K_{\mathfrak{i}}$ such that $\mathfrak{i} \in \Sigma_{\alpha,n,x}^S$. Therefore, if $K_{\mathfrak{i}}$ is in the $k$-th row ($k=0, 1, \dots$, from the bottom), then
$$
  \psi_{\mathfrak{i}}(x) = x + k\beta\ (\text{mod }1).
$$

Using the notation $T_\beta$ for the usual rotation with angle $\beta$ on $[0,1]$ we can write
$$
\sum_{\mathfrak{i}\in \Sigma_{\alpha,n,x}^S}h\big(\psi_{\alpha,\mathfrak{i}}(x)\big) = \sum_{k \in K} h\big(T_\beta^k x\big),
$$
where the summation goes over $k$-s such that $K_{\alpha,\mathfrak{i}}$ is in the $k$-th row for an $\mathfrak{i} \in \Sigma_{\alpha,n,x}^S$. Although transformation $T_\beta$ has nice properties for most $\beta$ choices (e.g. ergodic if $\beta$ is irrational), estimating the above sum is challenging, since the set over which the summation goes is sparse because the slices $\ell_\alpha(x) \cap E$ has dimension $<1$.

\bibliographystyle{plain}

\bibliography{biblo_5}

\end{document}